\newcommand{\po}{\left(}
\newcommand{\pf}{\right)}
\newcommand{\co}{\left[}
\newcommand{\cf}{\right]}
\newcommand{\cco}{\llbracket}
\newcommand{\ccf}{\rrbracket}
\newcommand{\R}{\mathbb R} 
\newcommand{\T}{\mathbb T} 
\newcommand{\Z}{\mathbb Z} 
\newcommand{\A}{\mathcal A} 
\newcommand{\N}{\mathbb N} 
\newcommand{\dd}{\text{d}}
\newcommand{\na}{\nabla}
\newcommand{\1}{\mathbbm{1}}
\newcommand{\bx}{\mathbf{x}}
\newcommand{\by}{\mathbf{y}}
\newcommand{\bX}{\mathbf{X}}
\newcommand{\bY}{\mathbf{Y}}
\newcommand{\nv}[1]{#1}
\newtheorem{thm}{Theorem}
\newtheorem{prop}[thm]{Proposition}
\newtheorem{cor}[thm]{Corollary}
\newtheorem{assu}{Assumption}
\title{Wasserstein contraction and Poincaré inequalities for elliptic diffusions \nv{with high diffusivity}}
\author{Pierre Monmarché}
\begin{document}

\maketitle

\begin{abstract}
We consider elliptic diffusion processes on $\mathbb R^d$. Assuming that the drift contracts distances outside a compact set, we prove that, \nv{when the diffusion coefficient is sufficiently large}, the Markov semi-group associated to the process is a contraction of the $\mathcal W_2$ Wasserstein distance, which implies a Poincaré inequality for its invariant measure. The result doesn't require neither reversibility nor an explicit expression of the invariant measure, and the estimates have a sharp dependency on the dimension. Some variations of the arguments are then used to study, first, the stability of the invariant measure of the process with respect to its drift  and, second,  systems of interacting particles, yielding a criterion for dimension-free Poincaré inequalities and quantitative long-time convergence for non-linear McKean-Vlasov type processes.
\end{abstract}

\noindent\textbf{Keywords:} Wasserstein distance ; Poincaré inequality ; coupling methods ; mean-field interaction ; propagation of chaos ; McKean-Vlasov processes

\medskip

\noindent\textbf{MSC class:} 60J60

\section{Overview}\label{sec1}

Consider $(X_t)_{t\geqslant0}$ a diffusion process on $\R^d$ solution to
\begin{equation}\label{eq:EDS}
\dd X_t  = b(X_t) \dd t + \sqrt{2T}\dd B_t
\end{equation}
where $b\in\mathcal C^1(\R^d)$, $T>0$ is a constant and $(B_t)_{t\geqslant0}$ is a standard Brownian motion. Denote by $P_t$ the associated semi-group, namely $P_t f(x) = \mathbb E_x\po f(X_t)\pf$ for all suitable $f$ on $\R^d$. Let
\begin{equation}\label{eq:defk}
k(x):=  - \sup \left\{ \frac{ (x-y)\cdot \po b(x)-b(y)\pf }{|x-y|^2},\ y\in\R^d,\ y\neq x\right\}\,.
\end{equation}
We are interested in cases where the following holds:
\begin{assu}\label{Assu1}
There exist $K,R\geqslant 0$ and $c>0$ such that 
\begin{equation}\label{eq:conditionk}
k(x) \geqslant -K\quad \forall x\in\R^d \qquad \text{and}\qquad k(x) \geqslant c \quad \forall x\in\R^d\text{ with }|x|\geqslant R\,.
\end{equation}
\end{assu}
 Under this condition, it is standard to check that the process is non-explosive, admits a unique invariant measure $\mu$ with a positive Lebesgue density and that the law of the process converges to $\mu$ as $t\rightarrow \infty$, using e.g. Lyapunov/Doeblin conditions \cite{HairerMattingly2008,MeynTweedieLivre,BakryCattiauxGuillin}. The first main problem considered in this work is to prove that $\mu$ satisfies a Poincar\'e inequality, namely that  there exists a constant $C_P>0$ such that, for all $f\in\mathcal C^1(\R^d)$, writing $\mu f= \int_{\R^d} f \dd \mu$,
\[\| f - \mu f\|_{L^2(\mu)}^2 \leqslant C_P \| \na f\|_{L^2(\mu)}^2\,.\]
This inequality is related to concentration inequalities for $\mu$ and to the long-time convergence in $L^2(\mu)$ of the law of the process toward $\mu$ (see e.g. \cite{BakryGentilLedoux,CattiauxGuillinDeviation} and  below). More precisely, for $\mu \in \mathcal P(\R^d)$ we write
\[C_P(\mu) := \po \inf\{ \| \na f\|_{L^2(\mu)}^2\,,  \ f\in  \mathcal C^1(\R^d),\ \| f - \mu f\|_{L^2(\mu)} =1\}\pf^{-1} \]
the optimal constant in the inequality.

When $b=-\na U$ for some $U\in\mathcal C^2(\R^d)$, Assumption~\ref{Assu1} is equivalent to say that $U$ is convex outside a compact set, and then a Poincaré inequality is known to hold. In fact, in this case, $\mu$ has an explicit density, proportional to $\exp(-U/T)$, and moreover the process is reversible, namely its generator
\begin{equation}\label{eq:generateur}
L  = T\Delta + b\cdot \na
\end{equation}
is self-adjoint in $L^2(\mu)$. The Poincaré  constant is then exactly the spectral gap of  $L$ (in non-reversible cases, the spectral gap may be larger).  Many tools are available to establish Poincaré inequalities for reversible diffusions. In particular, under Assumption~\ref{Assu1}, a Poincaré inequality can be obtained by combining the Bakry-Emery curvature criterion and the Holley-Stroock perturbation argument (see e.g. \cite[Propositions 4.2.7 and Proposition 4.8.1]{BakryGentilLedoux} or Section~\ref{Sec:reversible} below) or a local inequality/Lyapunov condition as in  \cite{CattiauxGuillin,BakryCattiauxGuillin}, see also  \cite{BakryGentilLedoux,CattiauxGuillinFathi,BakryCattiauxGuillin} and references within concerning the reversible case.

Notice that different drifts $b$ can give the same invariant measure, for instance if $b = -(I_d +J)\na U$ where $J$ is any skew-symmetric matrix then $\exp(-U/T)$ is invariant for the process. Assumption~\ref{Assu1} depends on $b$, but the Poincaré inequality depends only on $\mu$.

 Now, if $b$ is not a gradient and if $\mu$ is not explicit, much less is known. If $k(x)\geqslant c>0$ for all $x\in\R^d$ then the Bakry-Émery arguments  still works \cite{MonmarcheContraction,CattiauxGuillinSLC}. If $k(x) \geqslant c>0$ only for $|x|$ large enough, a Poincaré inequality should be expected, but to the best of our knowledge it cannot be established by existing methods.

This issue leads to the second main question of this work, which is to prove that $P_t$ is a contraction of the $\mathcal W_2$ Wasserstein distance for $t$ large enough. Indeed, from classical arguments   (see Section~\ref{sec:preuveL2}), this  implies a Poincaré inequality for $\mu$. Recall that, for $\alpha\in[1,\infty)$, the $\mathcal W_\alpha$ Wasserstein distance between  $\nu,\nu'\in\mathcal P(\R^d)$ (the set of probability measures on $\R^d$) is defined by
\[\mathcal W_\alpha(\nu,\nu') = \inf_{\pi\in\Pi(\nu,\nu')} \po \int_{\R^d} |x-y|^\alpha \pi(\dd x,\dd y)\pf^{1/\alpha}\,,\]
where $\Pi(\nu,\nu')$ is the set of probability measures on $\R^d\times\R^d$ with marginals $\nu$ and $\nu'$. Writing $\nu P_t$ the law at time $t$ of a process solving \eqref{eq:EDS} with an initial condition $X_0$ distributed according to $\nu$ (so that $(\nu P_t) f = \nu(P_t f)$ for all bounded measurable $f$), we want to find $M,\lambda>0$ such that
\begin{equation}\label{eq:contractW}
\forall t\geqslant 0,\ \forall \nu,\nu'\in\mathcal P(\R^d)\,,\qquad 
\mathcal W_\alpha(\nu P_t,\nu' P_t) \leqslant M e^{-\lambda t} \mathcal W_\alpha(\nu,\nu'),
\end{equation}
in particular for $\alpha=2$. Such a contraction of $\mathcal W_\alpha$ implies a contraction of $\mathcal W_{\beta}$ for $\beta\leqslant \alpha$, see e.g.  \cite{MonmarcheContraction}. Under Assumption~\ref{Assu1}, such a contraction can be proven for $\alpha=1$  using a reflection coupling and a concave modification of the distance, see \cite{Eberle1,Eberle2}.  However, due to the convexity at $0$ of $r\mapsto r^2$, for $\alpha=2$ the method only yields   estimates of the form $\mathcal W_2(\nu P_t,\nu' P_t) \leqslant M e^{-\lambda t} \max( \mathcal W_{2}(\nu,\nu'), \mathcal W_1^{1/2}(\nu,\nu'))$ (see \cite{JWang2,JWang1}), which are weaker than \eqref{eq:contractW} with $\alpha=2$ and are not sufficient to get a Poincaré inequality in the non-reversible case.

  On the other hand, when $k(x) \geqslant c >0$ for all $x\in\R^d$, the contraction \eqref{eq:contractW} holds with $M=1$ and $\lambda=c$ for all $\alpha$, see \cite{MonmarcheContraction}. In fact this is an equivalence\nv{, according to the Sturm-Von Renesse Theorem \cite{VonRenesseSturm} (which is also true in the present non-reversible case \cite{MonmarcheContraction}):}  if there exists $\alpha\geqslant 1$ such that \eqref{eq:contractW} holds with $M=1$ and some $\lambda\in\R$,  then necessarily it holds for all $\alpha$ and it implies that $k(x) \geqslant \lambda$ for all $x\in\R^d$. \nv{Moreover, it implies a so-called log-Sobolev inequality for the invariant measure, which is stronger than the Poincaré one (see \cite{MonmarcheContraction} for details)}. \nv{Since,} again, we are interested in the case where $k(x)\geqslant c>0$ only holds for $x$ large enough, it means in particular that in that case \eqref{eq:contractW} with a positive $\lambda$ can only hold with $M>1$.

\medskip

Our main result is the following:
\begin{thm}\label{Thm:main}
Under Assumption~\ref{Assu1}, suppose furthermore that
\begin{equation}\label{eq:conditionT}
T\geqslant  T_0:=(2K+c)\frac{ \alpha (K+c/4)R_*^2   +2  \sup\left\{-x\cdot b(x),\ |x|\leqslant R_*\right\}}{cd}  
\end{equation}
for some $\alpha \geqslant 2$, where $R_*=R\po 2+2K/c\pf^{1/d}$. Then  \eqref{eq:contractW} holds with
\[\lambda  = \frac c4 \,, \qquad M= M_\alpha := \po 1+ \frac{\alpha (2K+c)R_*^2}{4dT}\pf^{1/\alpha}\,.\]
\end{thm}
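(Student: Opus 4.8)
The goal is to establish a Wasserstein contraction, which for elliptic diffusions is naturally approached by synchronous (parallel) coupling: given two initial conditions $X_0, Y_0$, run
$$\dd X_t = b(X_t)\dd t + \sqrt{2T}\dd B_t, \qquad \dd Y_t = b(Y_t)\dd t + \sqrt{2T}\dd B_t$$
with the *same* Brownian motion $B_t$. Then $Z_t := X_t - Y_t$ evolves deterministically: $\dd |Z_t|^2 = 2 Z_t\cdot(b(X_t)-b(Y_t))\dd t \leqslant -2 k(X_t) |Z_t|^2 \dd t$ by definition of $k$, so $|Z_t|^2 \leqslant |Z_0|^2 \exp(-2\int_0^t k(X_s)\dd s)$. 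Because $k \geqslant -K$ everywhere and $k \geqslant c$ outside the ball of radius $R$, the exponent is controlled once we know $X_s$ spends enough time in the region $\{|x| \geqslant R\}$. So the heart of the matter is to transfer the decay to the $\mathcal W_\alpha$ distance: one wants $\mathbb E[|Z_t|^\alpha]^{1/\alpha} \leqslant M e^{-\lambda t} |Z_0|$ uniformly, which via Jensen reduces to controlling $\mathbb E\left[\exp\left(-\tfrac{\alpha}{2}\int_0^t 2 k(X_s)\dd s\right)\right]$ — an exponential functional of the occupation time of $X$ near the origin.

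**The Lyapunov argument.** The standard device to bound such exponential functionals is a Lyapunov function of the form $V(x) = \exp(\gamma |x|^2)$ or, given the factor $d$ in the temperature threshold and the shape of $R_*$, more likely a function like $V(x) = (\text{const} + |x|^2)^{\alpha/2}$ or $V(x)=|x|^\alpha$ suitably regularized near $0$. I would look for $V$ and constants so that
$$L V \leqslant -\theta V + \text{(something)} \cdot \mathbbm{1}_{|x|\leqslant R_*},$$
and then combine this with $k(x)\geqslant -K + (K+c)\mathbbm{1}_{|x|\geqslant R}$ to run a Gronwall/Dynkin argument on the process $t \mapsto e^{2\lambda t} |Z_t|^\alpha W(X_t)$ for an appropriate weight $W$. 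The choice $R_* = R(2 + 2K/c)^{1/d}$ and the appearance of $d$ in the denominator of $T_0$ strongly suggest that $V$ should be radial and that the computation $L V$ will produce a term $2Td \cdot (\text{derivative factor})$ from the Laplacian acting on the radial function — this is where the dimension enters, and where the high-temperature hypothesis $T \geqslant T_0$ is used to beat the bad contributions $-x\cdot b(x)$ on $|x|\leqslant R_*$ and the term $\alpha(K+c/4)R_*^2$. I expect the cleanest route is: (i) show $LV \leqslant (\text{rate}) V$ with the rate negative outside $B_{R_*}$ and controlled inside, (ii) use this to bound $\mathbb E_x[V(X_t) e^{\alpha \int_0^t(-k(X_s)+ \text{offset})\dd s}]$, (iii) read off $M_\alpha$ from the ratio $\sup_{B_{R_*}} V / \inf V$ or from the additive correction in $V$.

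**Assembling the contraction.** Concretely, the plan is: fix the coupling, write $|X_t - Y_t|^\alpha \leqslant |X_0-Y_0|^\alpha \exp(-\alpha \int_0^t k(X_s)\dd s)$, then choose the Lyapunov function $V_\alpha(x) := \big(1 + \beta|x|^2\big)^{\alpha/2}$ (or a close variant, with $\beta$ tuned so $\beta R_*^2$ matches the $2K/c$-type constants) and show via the generator computation — using $T \geqslant T_0$ — that the process $N_t := e^{\alpha c t / 2} |X_t - Y_t|^\alpha \, V_\alpha(X_t) / V_\alpha(Y_t)$, or more simply $e^{\alpha c t/2}|Z_t|^\alpha(1 + \beta|X_t|^2)$ with $\alpha=2$ then a bootstrap, is a supermartingale (or has controlled expectation). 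Taking expectations, optimizing over the coupling (infimum over $\Pi(\nu,\nu')$), and taking $\alpha$-th roots yields $\mathcal W_\alpha(\nu P_t, \nu' P_t) \leqslant M_\alpha e^{-ct/4} \mathcal W_\alpha(\nu,\nu')$. \textbf{The main obstacle} I anticipate is the bookkeeping in step (ii)–(iii): making the generator inequality for $V_\alpha$ sharp enough that the resulting constant is exactly $M_\alpha = (1 + \alpha(2K+c)R_*^2/(4dT))^{1/\alpha}$ rather than something lossier, and correctly handling the interplay between the $-k(X_s)$ weight and the Lyapunov drift (one typically needs $LV_\alpha + \alpha(K + c\mathbbm{1}_{|x|\leqslant R} \cdot(\text{sign}))V_\alpha \leqslant 0$ outside $B_{R_*}$, which forces the precise relation between $c/4$, $K$, and $T_0$). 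The factor $c/4$ rather than $c/2$ in the rate $\lambda$ is a tell-tale sign that half of the available contraction $c$ is "spent" absorbing the Lyapunov correction near the origin.
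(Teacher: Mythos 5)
Your opening moves are right: synchronous coupling gives the pathwise bound $|Z_t|^\alpha\leqslant |Z_0|^\alpha\exp\left(-\alpha\int_0^t k(X_s)\,\dd s\right)$, and the paper's proof is indeed equivalent to running a Feynman--Kac/weighted-distance argument of the form you sketch. If one sets $\phi(x)=T+\alpha\kappa(x)$, the inequality \eqref{eq:conditionPsi} that the paper establishes for the modified distance $\rho(x,y)=|x-y|^\alpha(2T+\alpha\kappa(x)+\alpha\kappa(y))$ is exactly $L\phi+\alpha(\lambda-k)\phi\leqslant 0$, the Lyapunov condition for the killed semigroup $\mathbb E_x\bigl[\exp(-\alpha\int_0^t k)\phi(X_t)\bigr]$. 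So the structure you describe in your ``assembling'' paragraph is the right one, with $M_\alpha=(\sup\phi/\inf\phi)^{1/\alpha}$.

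The genuine gap is the shape of the weight, and this is not a detail but the central idea. You propose $V_\alpha(x)=(1+\beta|x|^2)^{\alpha/2}$ (or $|x|^\alpha$, or $e^{\gamma|x|^2}$) and a drift condition $LV\leqslant-\theta V+(\cdot)\1_{|x|\leqslant R_*}$: this is the standard Lyapunov paradigm, where the weight grows at infinity and the \emph{drift} $b\cdot\nabla$ yields decay outside a compact set. That goes in exactly the wrong direction here. First, such a $V$ is unbounded, so $\sup\phi/\inf\phi=\infty$ and you do not get a finite prefactor $M_\alpha$; at best you would get a Lyapunov-weighted estimate, not a true $\mathcal W_\alpha$-contraction. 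Second, and more fundamentally, the sign of $\Delta V$ is wrong: $(1+\beta|x|^2)^{\alpha/2}$ is subharmonic, whereas the Feynman--Kac condition forces $L\phi$ to be strongly negative precisely where $k<\lambda$, i.e.\ in the ball $|x|\leqslant R$ where the drift is non-contractive; since $b\cdot\nabla\phi$ is only $O(1)$ there, this negativity must come from $T\Delta\phi$, so $\phi$ must be \emph{superharmonic} on the ball. The paper's $\kappa(x)=g(|x|^2)-\inf g$ is exactly such an object: bounded, radially \emph{decreasing}, flat (hence $\nabla\kappa=\Delta\kappa=0$) for $|x|\geqslant R_*$, and constructed by solving the radial ODE $rg''+\tfrac d2 g'=h$ so that $\Delta\kappa\leqslant k-\tfrac c2$ everywhere. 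In words: the role of the weight is the \emph{converse} of the usual Lyapunov one --- rather than using the drift to contract outside a compact set, one uses the diffusion $T\Delta$ to manufacture contraction \emph{inside} the compact set where the drift fails; this is where $T\geqslant T_0$ enters, and it is why $R_*=R(2+2K/c)^{1/d}$ and the factor $d$ in $T_0$ appear (they come from the radial Laplacian in the ODE for $g$). Without identifying this reversal and the resulting bounded, superharmonic-in-the-ball weight, your plan does not yield the stated contraction.
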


This is proven in Section~\ref{Sec:preuveThmMain}. In contrast to the Poincaré inequality, this result is new even in the reversible case.


\nv{Let us now state some implications of a $\mathcal W_2$ contraction when $M>1$,} obtained from known arguments (see  Section~\ref{Sec:AutresPreuves} for the proof of the next result). To avoid technical discussions, we assume that the coordinates of the force fields $b$ are in $\mathcal A$ the set of $\mathcal C^\infty$ functions from $\R^d$ to $\R$ with all derivatives growing at most polynomially at infinity (with a slight abuse of notation we simply write $b\in\A$ in that case), and we only consider test functions in $\mathcal A$. Combined with Assumption~\ref{Assu1} which implies a time-uniform Gaussian moment for $X_t$ via standard Lyapunov arguments, we get that, for all $f\in\A$ and all $t\geqslant 0$, $L\in\A$, $P_t f\in \A$ and $\partial_t P_t f = LP_t f = P_t Lf$ (see e.g. the proof of \cite[Theorem 2.5]{EthierKurtz}).

\begin{thm}\label{Thm:main2}
\nv{Assume that $k(x) \geqslant - K$ for all $x\in\R^d$ for some $K\geqslant 0$,}  that $b\in\A$ and that a Wasserstein contraction \eqref{eq:contractW} holds with $\alpha=2$ for some $M\geqslant1 ,\lambda>0$. Then:
\begin{enumerate}
\item The invariant measure $\mu$ satisfies a Poincaré inequality with
\[C_P(\mu) \leqslant \frac{M^2 T}{\lambda}\,.\]
\item For all $f\in \A$ and all $t\geqslant 0$,
\begin{equation}\label{eq:Variancedecay}
\|P_t f -\mu f\|_{L^2(\mu)} \leqslant \min \po e^{-tT/C_P(\mu)}, \po 1+ \nv{\frac{T(e^{2\lambda t}-1)}{C_P(\mu)M^2\lambda }}\pf^{\nv{-1/2}}\pf \|f -\mu f\|_{L^2(\mu)}\,.
\end{equation}
\item For  all $t>0$ and \nv{any} probability law $\nu$ on $\R^d$ with finite second moment, $\nu P_t$ has a density $h_t$ with respect to $\mu$ and 
\begin{equation}\label{eq:thm2Wang}
\|\nu P_t - \mu\|_{TV}^2 \ \leqslant\ \mu (h_t\ln h_t) \  \leqslant \ \frac{J(t)}{2T} \mathcal W_2^2(\nu,\mu)\,,
\end{equation}
where $J(t) \leqslant K/(1-e^{-2Kt})$ for all $t> 0$ and
\[J(t) \leqslant  M^2 (K+\lambda) \po 1+K/\lambda\pf^{\lambda/K}e^{-2\lambda t}\qquad \text{for all } t \geqslant \frac{\ln(1+K/\lambda )}{2K}\]
if $K>0$, and the limit of these expressions as $K\rightarrow 0$ if $K=0$.
\item If furthermore $b=-\na U$ for some $U\in\mathcal C^2(\R^d)$ then  $C_P(\mu)\leqslant T/\lambda$. 
\end{enumerate}
\end{thm}

\nv{In both Theorems\ref{Thm:main} and \ref{Thm:main2}, keep in mind that $\mu$ and $P_t$ depends on $T$.}

As we see, we only have a partial answer to our initial questions, since the results only hold for $T$ large enough (or, equivalently, for $R$ small enough, which means the results hold for small perturbations of the case where $k(x)\geqslant c$ for all $x\in\R^d$). We didn't try to make the condition \eqref{eq:conditionT} on $T$ as sharp as possible: it can be slightly improved, but it cannot be suppressed simply by optimizing our proof.  We do not know whether, for $T=1$, Assumption~\ref{Assu1} is sufficient to get \eqref{eq:contractW}  for some $M,\lambda$ or to get  a Poincaré inequality for $\mu$. However, notice that, for $T\geqslant T_0$, Theorem~\ref{Thm:main} gives another important information, which is that the contraction~\eqref{eq:contractW} and the Poincaré inequality hold respectively with   $M,\lambda$ and $C_P(\mu)/T$ which are uniform in $T\geqslant T_0$. Now, this part is clearly false if we suppress the condition that $T$ has to be large enough, namely the statement \emph{``Under Assumption~\ref{Assu1}, there exist $M,\lambda>0$ such that, for all $T>0$, \eqref{eq:contractW} holds"} is clearly false, and so is \emph{``Under Assumption~\ref{Assu1}, there exists $C>0$ such that, for all $T>0$, $C_P(\mu) \leqslant
 TC$"}. Indeed, the first statement would imply the second (according to Theorem~\ref{Thm:main2}), and  it is well known that,  if for instance $b=-\na U$ where $U$ has several isolated local minima, then $C_P(\mu)\geqslant e^{a/T}$ for some $a>0$ for $T$ small enough \cite{HKS,MenzSchlichting}.

Interestingly, apart from the dependency on $T$, the bounds on $M,\lambda$ and $ C_P(\mu)$ given by Theorems~\ref{Thm:main} and \ref{Thm:main2} behave rather well with the dimension $d$, in contrast to what usually give the methods based on the existence of a Lyapunov function and of a local Poincaré inequality \cite{CattiauxGuillin,BakryCattiauxGuillin} or on the perturbation of a reference measure \cite{HKS}. For instance, consider the probability measure $\mu\propto \exp(-U)$ with $U(x)=|x|^\beta/\beta$, for $\beta>2$.  By applying Theorems~\ref{Thm:main} and \ref{Thm:main2}, we get that 
\begin{equation}\label{eq:Cpbeta}
C_P(\mu) \leqslant  \frac{ 8  (\beta-1) \po 1+ 2^{\beta/d+1} \pf^{1-2/\beta}}{d^{1-2/\beta}}\,,
\end{equation}
see Section~\ref{Sec:convexDegenere}. By contrast, using a standard curvature$+$bounded perturbation argument, one cannot get better than what is given by the curvature result, which is dimension-free (see  Section~\ref{Sec:reversible} or \cite[Remark 5.21]{CattiauxGuillinSLC}). Besides, here, $\mu$ is a radial log-concave probability measure, for which two-sided bounds on the Poincaré constants are known \cite{Bobkov,BonnefontJoulinMa}, in relation to the KLS conjecture \cite{ChenKLS,LeeVempala}. In particular, for $U(x) =|x|^\beta/\beta$, \cite[Corollary 4.2]{BonnefontJoulinMa} reads
\[\frac{d}{(d+2) d^{1-2/\beta}} \leqslant  C_P(\mu) \leqslant \frac{d+1}{(d-1) d^{1-2/\beta}}  \]
for $d\geqslant 2$. Hence, the dependency in the dimension $d$ in \eqref{eq:Cpbeta} (which is based on our general result and thus does not use that $\mu$ is radial) is  optimal.

\nv{As a last remark on Theorem~\ref{Thm:main2}, notice that, in \eqref{eq:Variancedecay}, the minimum is always given by $e^{-tT/C_P(\mu)}$ in the reversible case. However, there are non-reversible cases where $\lambda> T/C_P(\mu)$, so that the second term becomes smaller for large $t$. For instance, in the Gaussian case $\mu \propto \exp(- U/T)$  with $U(x) = x\cdot Ax$ for some definite positive symmetric  matrix $A$, denote by $\nu_1,\dots,\nu_d$ the eigenvalues of $A$. Then it is well known that $T/C_P(\mu) = \min\{ \nu_i,i\in\cco 1,d\ccf\}$, while non-reversible Gaussian processes with invariant measure $\mu$ are constructed in \cite{Lelievre2012} with  a linear drift $-Bx$ where the real parts of the eigenvalues of the matrix $B$ are larger than $\bar\nu := (\nu_1+\dots+\nu_d)/d$,  so that a $\mathcal W_2$ contraction holds with $\lambda = \bar\nu$  (as can be seen using a synchronous coupling, see e.g. \cite{MonmarcheContraction}).   }

\medskip

Contrary to a simple long-time convergence at equilibrium in $\mathcal W_\alpha$, a contraction  of $\mathcal W_\alpha$ can easily lead to perturbative results.  For instance, consider on $\R^d$ a  continuous process solving
\begin{equation}\label{eq:EDSpertubée}
\dd Y_t  = \tilde b(Y_t,Z_t) \dd t + \sqrt{2T}\dd B_t
\end{equation}
where $Z=(Z_t)_{t\geqslant 0}$ is a random càdlàg process on some state space $E$ and $\tilde b :\R^d\times E\rightarrow \R^d$. Denote by $\tilde\nu_t$ the law of $Y_t$. 

\begin{prop}\label{prop:perturbation}
Let $b$ be a $\mathcal C^1$ vector field on $\R^d$ satisfying Assumption~\ref{Assu1} and $P_t$ be its associated semi-group. Let $\alpha\geqslant 2$. Assume that $T\geqslant T_0$ with $T_0$ given by \eqref{eq:conditionT}. Then, for all $\nu\in\mathcal P(\R^d)$ and $t\geqslant 0$,
\begin{equation}\label{eq:perturb}
\mathcal W_\alpha(\nu P_t,\tilde \nu_t) \leqslant M_\alpha e^{-\lambda t} \mathcal W_\alpha(\nu ,\tilde \nu_0)+ M_\alpha^\alpha  \int_0^t  e^{\lambda (s-t)}  \po \mathbb E \po | b(Y_s) - \tilde b(Y_s,Z_s)|^\alpha\pf\pf^{1/\alpha}\dd s\,.
\end{equation}
where $\lambda,M_\alpha$ are as in Theorem~\ref{Thm:main}.
\end{prop}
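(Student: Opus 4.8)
\medskip

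\noindent\emph{Proof strategy.} The plan is a variation-of-constants estimate carried out directly at the level of $\mathcal W_\alpha$, using only the contraction \eqref{eq:contractW}, which under the present hypotheses holds with $\lambda=c/4$ and $M=M_\alpha$ by Theorem~\ref{Thm:main}. First I would discard the trivial case in which $\mathcal W_\alpha(\nu,\tilde\nu_0)=\infty$ or $s\mapsto\big(\mathbb E|b(Y_s)-\tilde b(Y_s,Z_s)|^\alpha\big)^{1/\alpha}$ fails to be integrable on $[0,t]$: then the right-hand side of \eqref{eq:perturb} is infinite and there is nothing to prove, so I may assume both are finite. Fixing $t>0$, write $\tilde\nu_s$ for the law of $Y_s$ and $\tilde\nu_s P_{t-s}$ for the law at time $t$ of a solution of \eqref{eq:EDS} launched at time $s$ from $\tilde\nu_s$, and set $g(s):=\mathcal W_\alpha(\nu P_t,\tilde\nu_s P_{t-s})$. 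Then $g(t)=\mathcal W_\alpha(\nu P_t,\tilde\nu_t)$ is the target, while by \eqref{eq:contractW}, $g(0)=\mathcal W_\alpha(\nu P_t,\tilde\nu_0 P_t)\leqslant M_\alpha e^{-\lambda t}\mathcal W_\alpha(\nu,\tilde\nu_0)$, which is the first term of \eqref{eq:perturb}.

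The core is a one-step inequality: for $0\leqslant s\leqslant s'\leqslant t$, the triangle inequality, the semi-group identity $\tilde\nu_s P_{t-s}=(\tilde\nu_s P_{s'-s})P_{t-s'}$ and \eqref{eq:contractW} applied over the lag $t-s'$ give
\[g(s')\ \leqslant\ g(s)+M_\alpha e^{-\lambda(t-s')}\,\mathcal W_\alpha(\tilde\nu_s P_{s'-s},\tilde\nu_{s'})\,.\]
So $g$ transports, through the contracting semi-group, the local mismatch $\mathcal W_\alpha(\tilde\nu_s P_{h},\tilde\nu_{s+h})$ between one $h$-step of the unperturbed dynamics and one $h$-step of the $Y$-dynamics (with $h=s'-s$). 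To control this mismatch I would use a synchronous coupling: let $(\hat X_u)_{u\in[s,s+h]}$ solve \eqref{eq:EDS} with $\hat X_s=Y_s$ and the same Brownian motion as $Y$, so that $\hat X_{s+h}\sim\tilde\nu_s P_h$, $Y_{s+h}\sim\tilde\nu_{s+h}$, the noise cancels and $\hat X_{s+h}-Y_{s+h}=\int_s^{s+h}\big(b(\hat X_u)-\tilde b(Y_u,Z_u)\big)\dd u$. Then Minkowski's integral inequality, after inserting $b(Y_u)$, gives
\[\mathcal W_\alpha(\tilde\nu_s P_h,\tilde\nu_{s+h})\ \leqslant\ \int_s^{s+h}\|b(\hat X_u)-b(Y_u)\|_{L^\alpha}\dd u+\int_s^{s+h}\|b(Y_u)-\tilde b(Y_u,Z_u)\|_{L^\alpha}\dd u\,.\]

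Telescoping $g$ along a partition $0=s_0<\dots<s_n=t$ of mesh $\delta$, bounding $e^{-\lambda(t-s_{i+1})}\leqslant e^{\lambda\delta}e^{-\lambda(t-u)}$ on $[s_i,s_{i+1}]$, and letting $\delta\to0$: the second integral contributes $M_\alpha\int_0^t e^{\lambda(s-t)}\big(\mathbb E|b(Y_s)-\tilde b(Y_s,Z_s)|^\alpha\big)^{1/\alpha}\dd s$, while the first, where the auxiliary diffusion $\hat X^{(i)}$ restarts from $Y_{s_i}$ on each cell, should only be $O(\delta)$, since $\|b(\hat X^{(i)}_u)-b(Y_u)\|_{L^\alpha}=O(\delta)$ on $[s_i,s_{i+1}]$ by local Lipschitzness of $b$ and a Grönwall argument. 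This yields \eqref{eq:perturb} — in fact with $M_\alpha$ rather than $M_\alpha^\alpha$ in front of the integral, which is admissible since $M_\alpha\geqslant1$.

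The step I expect to be the main obstacle is precisely this control of the local error $\|b(\hat X^{(i)}_u)-b(Y_u)\|_{L^\alpha}$ and, more generally, the finiteness of the $L^\alpha$-moments that appear, since $b$ is only $\mathcal C^1$ (not globally Lipschitz) and nothing is assumed on the drift $\tilde b$ of $Y$. I would handle this by a routine localization — stopping $|\hat X^{(i)}|+|Y|$ at the exit of a ball — together with the time-uniform Gaussian moment bound Assumption~\ref{Assu1} provides for diffusions with one-sided Lipschitz drift, truncating or mollifying $b$ and passing to the limit if needed. An alternative, closer to the proof of Theorem~\ref{Thm:main} and producing the constant $M_\alpha^\alpha$ verbatim, is to realize the coupling built there with the given $Y$: this defines a process $(X_t)$ solving \eqref{eq:EDS} from $\nu$, coupled to $(Y_t)$, to which one applies Itô's formula for $\mathbb E[\rho(X_t,Y_t)^\alpha]$, where $\rho$ is the modified distance of that proof, with $|x-y|\leqslant\rho(x,y)\leqslant M_\alpha|x-y|$ and (one checks) $|\na_y\rho|\leqslant M_\alpha$. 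Replacing $b$ by $\tilde b(\cdot,Z_t)$ in the $Y$-component then only adds the term $\na_y\rho^\alpha(X_t,Y_t)\cdot\big(\tilde b(Y_t,Z_t)-b(Y_t)\big)\leqslant\alpha M_\alpha^\alpha|X_t-Y_t|^{\alpha-1}|\tilde b(Y_t,Z_t)-b(Y_t)|$, and Hölder's inequality together with Grönwall's lemma close the estimate.
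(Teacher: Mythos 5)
Your second "alternative" sketch is essentially the paper's proof, but there is a concrete error in the way you carry it out. The paper does \emph{not} reuse the symmetric weight $\rho(x,y)=|x-y|^\alpha\po 2T+\alpha\kappa(x)+\alpha\kappa(y)\pf$ from the proof of Theorem~\ref{Thm:main}; instead it deliberately drops the $\kappa(y)$-term and works with $\rho(x,y)=|x-y|^\alpha\po T+\alpha\kappa(x)\pf$. This matters: with the symmetric weight, differentiating along the coupled dynamics produces, besides the $\alpha|x-y|^{\alpha-1}$-type mismatch term you record, an \emph{extra} term $\alpha|x-y|^\alpha\,\nabla\kappa(Y_t)\cdot(\tilde b(Y_t,Z_t)-b(Y_t))$, since the $Y$-drift hits $\kappa(y)$ as well. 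Equivalently, your claim that ``one checks $|\nabla_y\rho|\leqslant M_\alpha$'' for the $\alpha$-th root of the Theorem~\ref{Thm:main} distance is false: one actually gets $|\nabla_y\rho|\leqslant M_\alpha+ |x-y|\|\nabla\kappa\|_\infty/(2T)$, which grows linearly in $|x-y|$, and the extra term has the wrong power of $|x-y|$ to fit directly into the H\"older split that leads to $m(t)^{1-1/\alpha}$. Removing $\kappa(y)$ from $\rho$ kills this term at the source, leaves the estimate on $\Psi$ and the definition of $\lambda=c/4$ intact (only the $x$-side of the argument from Section~\ref{sec:simplecondition} is used), and still gives $T|x-y|^\alpha\leqslant\rho\leqslant TM_\alpha^\alpha|x-y|^\alpha$, whence the constant $M_\alpha^\alpha$. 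With that single change your second route becomes the paper's proof: synchronous coupling, It\^o on $\rho$, H\"older, and a Gr\"onwall argument on $m^{1/\alpha}$.

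Your first route — Duhamel in $\mathcal W_\alpha$ using only the contraction \eqref{eq:contractW} — is genuinely different from the paper's, and would in principle give the slightly better constant $M_\alpha$ rather than $M_\alpha^\alpha$. But as you note yourself, it requires controlling the one-step local error $\mathcal W_\alpha(\tilde\nu_s P_h,\tilde\nu_{s+h})$ up to $o(h)$ with only $b\in\mathcal C^1$, no assumptions on $\tilde b$, and no a priori moment control on $Y$; that localization/mollification step is a real obstruction, not a routine one, and you would have to furnish it. The paper's choice of working pathwise with a weighted $|x-y|^\alpha$ avoids this entirely: it never needs to compare a short piece of $P_t$ with a short piece of the $Y$-dynamics, it only needs It\^o's formula along the single synchronous coupling.
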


This is proven in Section~\ref{sec:perturbation}. The right hand side in \eqref{eq:perturb} can be bounded given additional informations on $\tilde b$, for instance if $\tilde b(y,z)= \tilde b(y)$ and we simply assume that $\|b-\tilde b\|_\infty < \infty$ as in \cite{EberleZimmer,DiscreteSticky} then
\[\mathcal W_\alpha(\nu P_t,\tilde \nu_0 \widetilde{P_t}) \leqslant M_\alpha e^{-\lambda t} \mathcal W_\alpha(\nu ,\tilde \nu_0)+ M_\alpha^\alpha  \frac{1- e^{\nv{-\lambda t}}}\lambda \|b-\tilde b\|_\infty\,,\]
with $\widetilde{P_t}$ the semi-group associated to $\tilde b\cdot\na + T\Delta$. In particular, any invariant measure $\tilde\mu$ of $\widetilde{P_t}$ satisfies
\[\mathcal W_\alpha(\mu,\tilde \mu ) \leqslant    \frac{M_\alpha^\alpha}\lambda \|b-\tilde b\|_\infty\,.\]
Hence, under our restrictive condition \eqref{eq:conditionT}, we extend the results of \cite{EberleZimmer}, which are restricted to $\alpha=1$.

More generally, the right hand side in \eqref{eq:perturb} can be bounded under the assumption that $\mathbb E(|b(y)-\tilde b(y,Z_t)|^\alpha) \leqslant Q(|y|)$ for all $y\in\R^d$ for some polynomial $Q$ and then with some moment estimates on $Y_t$ obtained by Lyapunov arguments (see e.g. the proof of Theorem~\ref{thm:champmoyen} below). 

A case of particular interest, in view both of the perturbation result and of the condition on the \nv{diffusion coefficient $T$ (to be thought as a temperature parameter in statistical physics)}, is given by systems of interacting particles, detailed in Section~\ref{sec:particules}.

\medskip

As a summary, the rest of this paper is organized as follows. Section~\ref{Sec:preuveThmMain} is devoted to the proof of Theorem~\ref{Thm:main}. Section~\ref{Sec:AutresPreuves} gathers the proofs of the other results stated in this introduction, namely Theorem~\ref{Thm:main2}, the Poincaré inequality \eqref{eq:Cpbeta} and Proposition~\ref{prop:perturbation}, and a discussion on the reversible case. System of interacting particles  are studied  in Section~\ref{sec:particules}. Finally, we conclude this work in Section~\ref{sec:discussion} by an informal discussion on our method, related works and possible perspectives.

\section{Proof of the main theorem}\label{Sec:preuveThmMain}

 Assumption~\ref{Assu1} is enforced in all this section, devoted to the proof of Theorem~\ref{Thm:main}.
 
 \subsection{A probabilistic proof}\label{sec:demo-couplage}

\subsubsection{Synchronous coupling and modified \nv{cost}}\label{Sec:preuve_couplage}
For $(B_t)_{t\geqslant 0}$ a standard Brownian motion on $\R^d$, we consider $(X_t,Y_t)_{t\geqslant 0}$ the Markov process on $\R^d\times\R^d$ solution to
\[\dd X_t  = b(X_t) \dd t + \sqrt{2T}\dd B_t\,\qquad \dd Y_t  = b(Y_t) \dd t + \sqrt{2T}\dd B_t\,,\]
which is called the parallel or synchronous coupling of two diffusions \eqref{eq:EDS}. The generator $\mathcal L_s$ of this process is given by 
\[\mathcal L_s f(x,y) = b(x)\cdot\na_x + b(y)\cdot \na_y + T \na \cdot \po A \na f\pf (x,y) \qquad \text{where} \qquad A= \begin{pmatrix} I_d & I_d \\ I_d & I_d\end{pmatrix} \,.\]
Given $\alpha\geqslant 2$ and a bounded positive $\nv{\omega}\in\mathcal C^2(\R^d)$, consider
\[\rho(x,y)=|x-y|^\alpha \po 2T + \alpha \nv{\omega}(x)+ \alpha \nv{\omega}(y)\pf\,,\]
\nv{which is a modification of the usual transport cost $|x-y|^\alpha$ for the Wasserstein distances.} 
Using that $A \na f(x,y)=0$ for $f(x,y)=|x-y|^2$, we get
\begin{eqnarray*}
\mathcal L_s \rho(x,y) & =&  \alpha |x-y|^{\alpha-2} (x-y)\cdot \po b(x)-b(y)\pf \po 2T + \alpha \nv{\omega}(x)+ \alpha \nv{\omega}(y)\pf + \alpha |x-y|^{\alpha} \po L\nv{\omega}(x)+L\nv{\omega}(y)\pf\\
& \leqslant &   \alpha |x-y|^{\alpha} \po \Psi(x)+\Psi(y)\pf\,,
\end{eqnarray*}
with
\begin{equation}\label{eq:Psi}
\Psi(x) = - k(x) \po T + \alpha \nv{\omega}(x)\pf +  L\nv{\omega}(x)\,.
\end{equation}
For now, assume that $\nv{\omega}$ is such that there exists $\lambda>0$ such that, 
\begin{equation}\label{eq:conditionPsi}
\forall x\in\R^d\,,\qquad \Psi(x) \leqslant -\lambda    \po T+\alpha \nv{\omega}(x) \pf\,.
\end{equation}
 Then, $\mathcal L_s \rho + \alpha \lambda \rho \leqslant 0$, i.e. $(e^{\alpha \lambda t}\rho(X_t,Y_t))_{t\geqslant 0}$ is a submartingale and for all $t\geqslant 0$,
 \[\mathbb E\po \rho \po X_t,Y_t\pf\pf \leqslant e^{-\alpha\lambda t}\mathbb E \po \rho(X_0,Y_0)\pf\,.\]
Set
 \[M= \po 1+ \frac{\alpha \|\nv{\omega}\|_\infty}{T}\pf^{1/\alpha} \,.\]
 Let $\nu,\nu' \in \mathcal P(\R^d)$. For any $\pi_0\in\Pi(\nu,\nu')$, considering an initial condition $(X_0,Y_0)\sim \pi_0$ independent from $(B_t)_{t\geqslant 0}$, we obtain
 \[\mathcal W_\alpha\po \nu P_t,\nu' P_t \pf \leqslant \po\mathbb E\po |X_t-Y_t|^\alpha \pf\pf^{1/\alpha} \leqslant M e^{-\lambda t} \po\mathbb E_{\pi_0}\po |X_0-Y_0|^\alpha \pf\pf^{1/\alpha}\,.\]
Finally, taking the infimum over $\pi_0\in\Pi(\nu,\nu')$ yields  \eqref{eq:contractW}. The proof is thus complete if we are able to construct a bounded positive $\nv{\omega} \in \mathcal C^2(\R^d)$ such that \eqref{eq:conditionPsi} holds for some $\lambda>0$. This is the content of the next section.

 \subsubsection{The weight function $\nv{\omega}$}\label{sec:simplecondition}

\begin{figure}
\begin{minipage}{0.45\linewidth}
\includegraphics[scale=0.3]{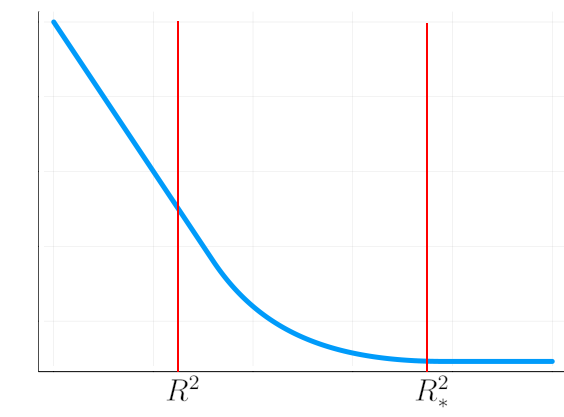}
\end{minipage}
\begin{minipage}{0.45\linewidth}
\includegraphics[scale=0.3]{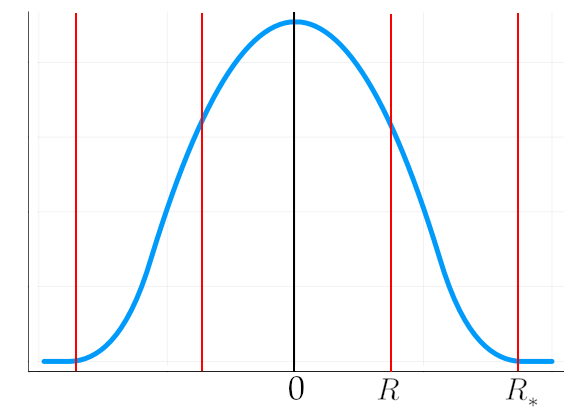}
\end{minipage}
\caption{Left: $r\mapsto g(r)$. Right: $x \rightarrow \omega(x) = g(|x|^2) - \min g$ (in dimension 1). For $r\leqslant R^2$, $g$ is affine decreasing, for $r\geqslant R_*^2$, it is constant, and in between it is convex but with $g''$ constrained not to be too large, which thus requires to take $R_*$ large enough. }\label{figure}
\end{figure}

Our goal is to construct a bounded positive function $\nv{\omega}$ such that
\[\forall x\in\R^d, \quad \Delta \nv{\omega} (x)  \leqslant    k(x) -\frac{c}{2} \qquad\text{and}\qquad Q:= \sup_{x\in\R^d} b(x)\cdot \na \nv{\omega}(x) <\infty \,. \]
Indeed, if this holds, taking $\lambda=c/4$, we get 
\begin{eqnarray*}
\Psi(x) + \lambda    \po T+\alpha \nv{\omega}(x) \pf  & = &  \po \lambda - k(x)\pf \po T + \alpha \nv{\omega}(x)\pf +  L\nv{\omega}(x)\\
& \leqslant & -\frac{c}{4} T + \alpha (K+\lambda) \|\nv{\omega}\|_\infty +Q
\end{eqnarray*}
and thus \eqref{eq:conditionPsi} holds if $T\geqslant 4\po \alpha (K+\lambda) \|\nv{\omega}\|_\infty +Q\pf/c$.

We take $\nv{\omega}$ of the form $\nv{\omega}(x) = g(|x|^2)-\inf g$ with $g\in\mathcal C^2(\R_+)$ to be chosen. Since
\[\na\nv{\omega}(x) = 2xg'(|x|^2) \,,\qquad  \Delta \nv{\omega} = 4|x|^2 g''(|x|^2) +  2dg'(|x|^2)\,,\]
setting $K_* = K/4+c/8$, we  take $g$ as the $\mathcal C^1$ solution of $g(0)=0$, $g'(0)=-2K_*/d$ and
\[r g''(r) +  \frac{d}2 g'(r) =  h(r):= \left\{\begin{array}{ll}
-K_*  & \text{for }r<R^2\\
c/8 & \text{for }r\in(R^2,R_*^2)\\
0 & \text{for }r>R_*^2
\end{array}\right.\]
where  $R_*>0$ remains to be chosen  so that $g'(R_*^2)=0$ (and thus $g'(r)=g''(r)=0$ for all $r> R_*^2$). \nv{See Figure~\ref{figure} for a draft of the graph of   $g$ and $\omega$.} Notice that $g$ is not $\mathcal C^2$, but this is easily solved by replacing this $g$ by some $g_\varepsilon \in\mathcal C^2(\R_+)$ such that, for a small $\varepsilon>0$, first, $g''_\varepsilon(r)=g'_\varepsilon(r)=0$ for $r\geqslant R_*^2 + \varepsilon$ and, second, 
$r g_\varepsilon''(r) +  \nv{(d/2)} g_\varepsilon'(r)$  is  always smaller than $h(r)$, equal to $ -K_*$ for $r\in[0,R^2]$ and to $c/8$ for $r\in[R^2+\varepsilon,R_*^2-\varepsilon]$. We can conclude the proof with $g_\varepsilon$ and finally let $\varepsilon$ vanish in the final result. For simplicity we write the proof directly with $g$.

For $r\in[0,R^2]$, we simply have $g'(r) = -2K_*/d$.  For $r\geqslant R^2$, using that  $\po r^{d/2} g'(r)\pf' = h(r) r^{d/2-1}$, we get
\[r^{d/2} g'(r)  = R^{d} g'(R^2) +  \frac c8\int_{R^2}^r s^{d/2-1}\dd s = -\frac{2K_*}{d}R^{d} +  \frac{c}{4d}\po r^{d/2} - R^{d}\pf \,,\]
and thus we \nv{choose}
\[    R_*  = R \po 1+ 8K_*/c\pf^{1/d} = R\po 2+2K/c\pf^{1/d}\,, \]
which concludes the construction of $\nv{\omega}$. It remains to estimate $Q$ and $\|\nv{\omega}\|_\infty$. Since $g$ is decreasing,  constant on $[R_*^2,+\infty)$ and such that $\|g'\|_\infty =2K_*/d$, we get
\[\forall r\geqslant0\,,\qquad  0 \geqslant g(r) \geqslant  g(R_*^2) \geqslant -\frac{2K_*R_*^2}{d}\,, \]
hence $\|\nv{\omega}\|_\infty \leqslant 2K_* R_*^2/d$, and
\[ Q  \leqslant  \frac{4K_*}{d} \sup\left\{-x\cdot b(x),\ |x|\leqslant R_*\right\}\,.\]
The proof is concluded by using these estimates in the definition of $M$ and the condition on $T$.

\subsection{Alternative proof with Bakry-Emery interpolation}\label{Sec:preuveAlternative}

In this section we give a second proof of Theorem~\ref{Thm:main} (see Section~\ref{Sec:reflexionProof} for a discussion on the specific interest of each proof). \nv{This proof is similar to the intertwinning method of Joulin, Bonnefont and their coauthors \cite{arnaudon2018intertwinings,bonnefont2022note,BonnefontJoulinMa,10.3150/12-BEJ433} (who focus on reversible cases). } For simplicity we only consider the case $\alpha=2$, which is the main case of interest due to Theorem~\ref{Thm:main2}. The expressions of $T_0$ and $M$ obtained along this alternative proof are slightly different than those stated in Theorem~\ref{Thm:main} and established in the first proof (again, we don't try to optimize the estimate on $T_0$). Moreover, in order to justify the time \nv{derivatives} in this section, we assume that the coordinate \nv{functions} of $b$ are in $\A$.

The proof is organized in three steps: first we rephrase Assumption~\ref{Assu1} as a local condition on the drift, namely a condition on its Jacobian. Second, similarly to Section~\ref{Sec:preuve_couplage}, we give the proof conditionally to the existence of a suitable weight function. Third, similarly to  Section~\ref{sec:simplecondition}, we construct the weight function.

\subsubsection{Step 1: \nv{an infinitesimal condition}} \label{sec:localCondition}

We start by an equivalent formulation of Assumption~\ref{Assu1}. We denote by $\nv{D b}$ the Jacobian matrix of $b$ and
\[\tilde k(x) = -\sup\{ u\cdot \nv{D b}(x) u,\ u\in\R^d,\ |u|=1\}\,.\]
It is clear that, for all $x\in\R^d$,
\begin{equation}\label{eq:ktilde}
\tilde k(x) \geqslant k(x) \geqslant \inf_{y\in\R^d} \int_{0}^1 \tilde k(x+ty) \dd t\,.
\end{equation}
In particular under Assumption~\ref{Assu1}, $\tilde k$ also satisfies \eqref{eq:conditionk}, with the same $K,R,c$. Alternatively, assume that there exist $K,R\geqslant 0$ and $c>0$ such that
\begin{equation}\label{eq:assu:alternative}
\tilde k(x) \geqslant -K\quad \forall x\in\R^d \qquad \text{and}\qquad  \tilde k(x) \geqslant c \quad \forall x\in\R^d\text{ with }|x|\geqslant R\,.
\end{equation}
Then, from \eqref{eq:ktilde}, $k(x) \geqslant - K$ for all $x\in\R^d$. Moreover, for all $x,y\in\R^d$,  the Lebesgue measure of $\{t\in[0,1], |x+ty|<R\}$ being less than $2R/(|x|+R)$,
\[k(x) \geqslant \inf_{y\in\R^d} \int_{0}^1 \tilde k(x+ty) \dd t \geqslant -K \frac{2R}{|x|+R} + c \po 1 - \frac{2R}{|x|+R}\pf \geqslant \frac{c}2\] 
for all $x\in \R^d$ with $|x|\geqslant \tilde R:= R\po 4(K+c)/c-1\pf$. In other words, Assumption~\ref{Assu1} is equivalent to assume that \nv{the infinitesimal condition}  \eqref{eq:assu:alternative} holds for some $K,R\geqslant 0$ and $c>0$. This latter condition is enforced for the rest of Section~\ref{Sec:preuveAlternative}.

\subsubsection{Step 2: Gamma calculus}

 Consider a positive $a\in\A$, to be chosen later on. Fix $t>0$ and $f\in\A$. The carré du champ of the generator $L$ is defined as $\Gamma(f,g) = \frac12 \po L(gf) - g Lf-fLg\pf$, with the notation $\Gamma(f)=\Gamma(f,f)$.  In the case of \eqref{eq:generateur}, this is simply $\Gamma(f,g)=T\na f\cdot\na g$. We write $[A,B]=AB-BA$.  For a given $i\in\cco 1,d\ccf$, write $g_s = \partial_{x_i} P_{t-s} f$ for $s\in[0,t]$. Then
\begin{eqnarray*}
\partial_s \po P_s\po a g_s^2\pf\pf   &= & P_{s} \po L(ag_s^2) - 2a g_s \partial_{x_i} LP_s f \pf  \\
&= & P_{s} \po L(a)g_s^2  + aL(g_s^2) + 2 \Gamma(a,g_s^2)- 2a g_s ([\partial_{x_i} , L] P_{t-s} f + Lg_s)\pf  \\
&= & P_{s} \po L(a)g_s^2  + 2a\Gamma(g_s) + 2 \Gamma(a,g_s^2)+ 2a g_s [L,\partial_{x_i}] P_{t-s} f\pf\,.  
\end{eqnarray*}
Using that
\[[L,\partial_{x_i}] = [b\cdot \na,\partial_{x_i}] = -(\partial_{x_i} b) \cdot \na  \]
and
\[2\Gamma(a,g_s^2) = 4g_s \Gamma(a,g_s) \geqslant -2\frac{\Gamma(a)}{ a} g_s^2 - 2 a \Gamma(g_s) \]
(where $\Gamma(a)/a$ is understood as $0$ if $\Gamma(a)=0$), summing over $i\in\cco 1,d\ccf$, we get
\begin{eqnarray*}
\partial_s \po P_s \po a |\na P_{t-s}f|^2\pf  \pf &\geqslant  & P_{s} \po  \po L(a) - 2\frac{\Gamma(a)}{ a}\pf |\na P_{t-s} f|^2   - 2a (\na P_{t-s} f) \cdot \nv{D b} \na P_{t-s} f\pf  \\
 &\geqslant  & 2 P_{s} \po \Phi(a) |\na P_{t-s} f|^2\pf
\end{eqnarray*}
with
\[\Phi(a) =   \frac12 L(a) - \frac{\Gamma(a)}{ a} + a \tilde k\]
Assume for now that $a$ is such  that there exists $\lambda>0$ such that
\begin{equation}\label{eq:conditionaBakryEmery}
a \text{ and }a^{-1}\text{ are bounded and } \Phi(a) \geqslant  \lambda a \,.
\end{equation} 
\nv{(This condition is similar to the ones in \cite{arnaudon2018intertwinings,bonnefont2022note,BonnefontJoulinMa,10.3150/12-BEJ433}, for instance \cite[Theorem 3.2]{arnaudon2018intertwinings}; in this work, the authors do not assume that $a$ is bounded, as they are interested in Brascamp-Lieb inequalities, namely weighted Poincaré inequalities, but in the present work we are interested in classical Poincaré inequalities and contraction of the Wasserstein distances associated to the standard Euclidean distance, which is why we add this condition so that $a|\na f|^2$ is equivalent to $|\na f|^2$).} Integrating the previous inequality yields
\[P_t\po a|\na f|^2\pf   \geqslant e^{2\lambda t}  a |\na P_t f|^2\,, \] 
and then
\[|\na P_t f|^2 \leqslant \|a\|_\infty \| a^{-1}\|_\infty e^{-2\lambda t} P_t|\na f|^2\,,\]
which concludes the proof since,  thanks to the work of Kuwada \cite{Kuwada1,Kuwada2}, it is equivalent to the $\mathcal W_2$ contraction \eqref{eq:contractW} with the same $\lambda$, $\alpha=2$ and $M = \sqrt{\|a\|_\infty \| a^{-1}\|_\infty }$. \nv{Indeed, more precisely, in the case of a diffusion process on $\R^d$, as a corollary of \cite[Theorem 2.2]{Kuwada1} (applied with $v$ the Lebesgue measure), we get the following:
\begin{prop}\label{prop:kuwada}
Assume that $b\in \mathcal A$ and that $k(x) \geqslant - K$ for all $x\in\R^d$ for some $K\geqslant 0$. For $\rho\in\R$, $M>0$ and $t\geqslant 0$, the two following assertions are equivalent: 
\begin{itemize}
\item For all $f\in\mathcal A$,
\[|\na P_t f| \leqslant M e^{-\lambda t} \sqrt{  P_t|\na f|^2}\,.\]
\item For all probability measures  $\nu,\mu$ on $\R^d$,
\[\mathcal W_2(\nu P_t,\mu P_t) \leqslant M e^{-\lambda t} \mathcal W_2(\nu,\mu)\,.\]
\end{itemize}
\end{prop}
Notice that, in \cite[Theorem 2.2]{Kuwada1}, the equivalence is stated for the class of functions $f$ which are bounded and Lipschitz. For $f\in\mathcal A$, we can find a sequence $f_n$ of bounded Lipschitz functions $(f_n)_{n\in\N}$ such that $|\na (f_n - f)|(x) \leqslant \varepsilon_n + \1_{|x|\geqslant 1/\varepsilon_n} q(x)$ for some polynomial $q$, where $\varepsilon_n \rightarrow 0$ as $n\rightarrow 0$. Using a synchronous coupling and that the process admits Gaussian moments, it is easily seen that $|\na P_t (f-f_n)| \leqslant e^{Kt} P_t|\na (f-f_n)| \rightarrow 0 $, hence the result.
}

\subsubsection{Step 3: the weight function} 

 It remains to construct a weight $a$ satisfying \eqref{eq:conditionaBakryEmery} for some $\lambda>0$ under the condition~\eqref{eq:assu:alternative}.
 As in Section~ \ref{sec:simplecondition}, we focus on the leading term for large $T$, setting
\[a(x) = T + 2\po  \|\nv{\omega}\|_\infty - \nv{\omega}(x) \pf \,,\]
where $\nv{\omega}$ is a bounded positive function to be chosen. Then, using that $a\geqslant T$ \nv{and $\Gamma(\omega)=T|\na \omega|^2$,}
\[\Phi(a) \geqslant -T\Delta \nv{\omega} -  b\cdot \na \nv{\omega} - 4|\nabla \nv{\omega}|^2 + T \tilde k -2 K\|\nv{\omega}\|_\infty \,.\]
Let $\nv{\omega}$ be such that
\[\forall x\in\R^d, \quad \Delta \nv{\omega} (x)  \leqslant    \tilde  k(x) -\frac{c}{2} \qquad\text{and}\qquad \tilde Q:=  K \|\nv{\omega}\|_\infty +  \sup_{x\in\R^d}\po  b(x)\cdot \na \nv{\omega}(x) + 4|\na \nv{\omega}(x)|^2 \pf<\infty \,,\]
as constructed in Section~\ref{sec:simplecondition}. Then 
\[\Phi(a) \geqslant \frac {cT}2 - \tilde Q  \geqslant \frac{cT}{3} \geqslant \frac c4 a\]
if 
\[T\geqslant \tilde T_0 := 6 \max \po \frac{\tilde Q}{c},\|\nv{\omega}\|_\infty \pf \,.\]
Moreover, for $T\geqslant \tilde T_0$, we get $T\geqslant 3 a/4 \geqslant 3T/4$, hence  $\|a\|_\infty \|a^{-1}\|_\infty \leqslant 4/3$, i.e. \eqref{eq:contractW} holds with $M=\sqrt{4/3}$ and $\lambda=c/4$. An explicit upper bound of $\tilde T_0$ follows from the estimates on $\nv{\omega}$ given in Section~\ref{sec:simplecondition}.

\section{Other proofs}\label{Sec:AutresPreuves}

\subsection{Proof of Theorem~\ref{Thm:main2}}\label{sec:preuveL2}
Setting $V(x) = e^{c|x|^2/4}$, Assumption~\ref{Assu1} classically implies that $LV\leqslant -aV + C$   for some constants $C,a>0$, and that $V\in L^2 (\mu)$. Moreover, by standard elliptic theory (see e.g. \cite[Theorem 0.5 and Condition 0.24.A1]{DynkinVol2}), the process \eqref{eq:EDS} admits a continuous positive transition kernel, hence, for all compact set $\mathcal K\subset \R^d$ and all $t>0$,  there exists $\eta>0$ such that $\inf_{x\in\mathcal K} P_t f(x) \geqslant \int_{\mathcal K} f(z)\dd z$ for all positive $f$. From \cite{HairerMattingly2008}, we get that $P_t f(x) \rightarrow \mu f$ as $t\rightarrow \infty$ for all $f\in \A$ (since $f/V$ is bounded if $f\in\A$). 
 For $f\in\A$, $x\in\R^d$ and $t\geqslant0$,
\begin{eqnarray*}
P_t f^2(x) - (P_t f(x))^2  & = &  \int_0^t \partial_s \po P_s(P_{t-s} f)^2(x)\pf \dd s\\
& = &  2 T \int_0^t P_s|\na P_{t-s} f|^2 (x)\dd s \\
& \leqslant & 2T P_t|\na f|^2 (x) M^2 \int_0^t e^{-2\lambda s} \dd s 
\end{eqnarray*}
where we used the equivalence of Wasserstein and gradient contractions \nv{of Proposition~\ref{prop:kuwada}}. Letting $t\rightarrow \infty$ in the previous equality yields the Poincaré inequality (for all $f\in\A$, and then for all $f\in L^2(\mu)$ by density).

From the    Lumer-Philips Theorem \cite[Chapter IX, p.250]{Yosida}, the Poincaré inequality is equivalent to
\[\forall f\in L^2(\mu)\,, \forall t\geqslant 0\qquad \|P_tf -\mu f\|_{L^2(\mu)} \leqslant e^{-tT/C_P}\|f -\mu f\|_{L^2(\mu)} \,.\]
Besides, for $t>0$ and $f\in\A$, we can also bound
\[P_t f^2 - (P_t f)^2  =  2 T \int_0^t P_s|\na P_{t-s} f|^2 \dd s \geqslant 2 T |\na P_{t} f|^2 M^{-2}\int_0^t e^{2\lambda s} \dd s\,. \]
Integrating with respect to $\mu$ and applying this with $f$ replaced by $f-\mu f$ yields
\[\|\na P_t f\|_{L^2(\mu)}^2 \leqslant \frac{M^2\lambda}{T(e^{2\lambda t}-1)} \po \|f-\mu f\|_{L^2(\mu)}^2- \|P_t f-\mu f\|_{L^2(\mu)}^2\pf \,.\]
Together with the Poincaré inequality, this means that,  for all $t\geqslant 0$, 
\[\|P_t f - \mu f\|_{L^2(\mu)}^2 \leqslant  \frac{C_PM^2\lambda}{T(e^{2\lambda t}-1)}  \po \|f-\mu f\|_{L^2(\mu)}^2- \|P_t f-\mu f\|_{L^2(\mu)}^2\pf\,,\]
i.e.
\[\|P_t f - \mu f\|_{L^2(\mu)}^2 \leqslant  \po 1+ \nv{\frac{T(e^{2\lambda t}-1)}{C_PM^2\lambda }}\pf^{-1}\|f-\mu f\|_{L^2(\mu)}^2\,.\]

Now, in the reversible case where $b=-\na U$, \nv{since
\[- \frac1t \ln  \po 1+ \nv{\frac{T(e^{2\lambda t}-1)}{C_PM^2\lambda }}\pf^{-1} = 2\lambda\,,\]
}
 this implies (see e.g. \cite[\nv{Lemma 2.14}]{CattiauxGuillinPAZ}) that, in fact,
\[\|P_t f - \mu f\|_{L^2(\mu)}^2 \leqslant  e^{-2\lambda t} \|f-\mu f\|_{L^2(\mu)}^2\,,\]
and thus  $C_P(\mu)\leqslant T/\lambda$. 

Finally, the first inequality in \eqref{eq:thm2Wang} is simply the Pinsker inequality, and the entropy/$\mathcal W_2$ regularisation is proven in \cite{RocknerWang}. It is assumed in the latter work that $b$ is Lipschitz, but it is not used in this part of the proof. We briefly recall the proof for completeness. Remark that if Assumption~\ref{Assu1} holds with $K=0$ then it also holds with all $K>0$ and thus it is sufficient to treat the case $K>0$. From \cite[Theorem 3.3]{Wang}, under Assumption~\ref{Assu1}, for all $t>0$ all positive $f$ and all $x,y\in\R^d$,
\[P_t \ln f(x) \leqslant \ln P_t f(y) + \frac{K|x-y|^2}{2T(1-e^{-2Kt})} \,.\]
Denoting by $P_t^*$ the dual of $P_t$ in $L^2(\mu)$, we apply the previous inequality with $f$ replaced by $P_t^* f$ for some positive $f$ with $\mu f= 1$ and integrate with a coupling measure $\pi \in \Pi( f\mu, \mu)$ to get
\begin{eqnarray*}
\int_{\R^d} P_t^* f(x)  \ln P_t^* f(x)   \mu(\dd x)  & =& \int_{\R^d} P_t\po \ln P_t^* f(x) \pf f(x) \mu(\dd x)\\
& \leqslant & \int_{\R^d} \ln P_t P_t^* f(y) \mu(\dd y) + \frac{K}{2T(1-e^{-2Kt})} \int_{\R^{2d}} |x-y|^2 \pi(\dd x,\dd y) \,.
\end{eqnarray*}
Using that $\mu$ is invariant by $P_t P_t^*$ and Jensen's inequality, 
\[\mu(\ln P_tP_t^* f) \leqslant \ln \mu(P_tP_t^* f) = \ln \mu f = 0\,,\]
and taking the infimum over $\pi$ concludes the proof of \eqref{eq:thm2Wang} with $J(t) = K/(1-e^{-2Kt})$. Then, using the $\mathcal W_2$ contraction, for all $s\in[0,t)$,
\[\nu P_{t} \po \ln \frac{\nu P_{t}}{\mu}\pf \leqslant \frac{K}{2T(1-e^{-2K(t-s)})} M^2 e^{-2\lambda s} \mathcal W_2(\nu,\mu)\,.\] 
\nv{The minimum of $s\mapsto e^{-2\lambda s} /(1-e^{-2K(t-s)})$ for  $s \leqslant t$ for a fixed $t>0$ is attained at  $s =  s_*: = t - \ln \po 1+K/\lambda\pf/(2K)$. When $s_*\geqslant 0$, the proof is concluded by taking $s=s_*$ in the previous bound, since 
\begin{eqnarray*}
\frac{K}{2T(1-e^{-2K(t-s_*)})} M^2 e^{-2\lambda s_*} &=&  \frac{M^2 K \po 1+K/\lambda\pf^{\lambda/K}e^{-2\lambda t} }{2T\po 1-(1+K/\lambda)^{-1}\pf }   \\
&=& \frac{M^2}{2T} (K+\lambda) \po 1+K/\lambda\pf^{\lambda/K}e^{-2\lambda t}\,.
\end{eqnarray*}
}

\subsection{Degenerate convex potential}\label{Sec:convexDegenere}

Let $U(x) = |x|^\beta/\beta$ for some $\beta>2$. For $T>0$, let $\mu_T$ be the probability law on $\R^d$ with density proportional to $\exp(-U/T)$. 
 If $X$ is a random variable with law $\mu_1$, then $T^{1/\beta}X$ is distributed according to $\mu_T$. 
 The scaling properties of the Poincaré inequality \nv{imply} that $C_P(\mu_T)= T^{2/\beta} C_P(\mu_1)$. For  $b = -\na U$, $\nv{D b}(0)=0$ and, for $x\neq 0$,
\[u\cdot \nv{D b}(x) u = -u\cdot \na^2 U(x) u  = -|x|^{\beta-2} \co \frac{\beta-2}\beta \frac{|x\cdot u|^2}{|x|^2} + |u|^2 \cf \leqslant - |x|^{\beta-2}  |u|^2\,.\]
 Then, for all $x\neq 0$ and all $y\neq x$,
\begin{eqnarray*}
\frac{(x-y)\cdot \po b(x)-b(y)\pf }{|x-y|^2} &=&  -\int_0^1  \frac{x-y}{|x-y|}  \cdot \na^2U(x+t(x-y)) \frac{x-y}{|x-y|}  \dd t\\
& \leqslant &  -  \int_0^1  |x+t(x-y)|^{\beta-2}  \dd t\\ 
&\leqslant &  -  |x|^{\beta-2} \inf_{\nv{r}\in\R } \int_0^1 |1+t\nv{r}|^{\beta-2} \dd t\\
& = &  - |x|^{\beta-2} \inf_{\nv{r}\geqslant 0 } \int_0^1 |1-t\nv{r}|^{\beta-2} \dd t\\
& = &  - |x|^{\beta-2} \inf_{\nv{r}\geqslant 0 } \frac1{\nv{r}} \int_{1-\nv{r}}^1 |s|^{\beta-2} \dd s\,.
\end{eqnarray*}
The term in the $\inf$ is non-increasing for $\nv{r}\leqslant 1$, non-decreasing for   $\nv{r}\geqslant 2$ and, for $\nv{r}\in[1,2]$, 
\[\frac1{\nv{r}}\int_{1-\nv{r}}^1 |s|^{\beta-2} \dd s \geqslant \frac12 \int_0^1 |s|^{\beta-2} \dd s = \frac{1}{2(\beta-1)} \,.\]
As a conclusion, for all $x\in\R^d$
\[k(x) \geqslant \frac{1}{2(\beta-1)} |x|^{\beta-2}\,,\]
which means that, for all $r>0$, Assumption~\ref{Assu1} holds with $K=0$, $R=r$ and $c=c(r):=  r^{\beta-2}/(2\beta-2)$. Besides, $-x\cdot b(x) =  |x|^\beta $ for all $x\in\R^d$ and thus Theorem~\ref{Thm:main}  applied with $\alpha=2$ and
\[T = T(r) :=  \frac{ 2^{2/d-1} r^2 c(r)+2(2^{1/d}r)^\beta}{d}  = \frac{ r^{\beta}  }{ d} \po \frac{1}{2^{2-2/d} (\beta-1)}   + 2^{\beta/d+1}\pf\leqslant \frac{r^\beta}{d}\po 1+ 2^{\beta/d+1} \pf \,,\]
yields (according to Theorem~\ref{Thm:main2} since we are in the reversible case)
\[C_P(\mu_{T(r)}) \leqslant T(r)\frac{4}{c(r)}\]
hence
\[C_P(\mu_1)  \leqslant  \po T(r)\pf^{1-2/\beta}\frac{4}{c(r)} \leqslant \frac{ 8  (\beta-1) \po 1+ 2^{\beta/d+1} \pf^{1-2/\beta}}{d^{1-2/\beta}}\,.\]
Notice that, as expected due to the homogeneity of the problem, the powers of $r$ have disappeared.

\subsection{The reversible case}\label{Sec:reversible}

If $b=-\na U$ and Assumption~\ref{Assu1} holds, let us check what explicit estimate can be obtained by the Holley-Stroock perturbation argument together with the Bakry-Emery curvature one. As  in Section~\ref{sec:localCondition}, we use that Assumption~\ref{Assu1} implies that $\na^2 U(x) \geqslant c\1_{|x|\geqslant R}-K\1_{|x|<R}$. Consider $V(x)= - g(|x|^2)$ for some $g\in\mathcal C^2$ to be chosen. Assume that $g$ is bounded and that $\na^2 (U+V)(x) \geqslant c/2$ for all $x\in\R^d$. Then the Bakry-Emery criterion states that $\tilde \mu \propto \exp(-(U+V)/T)$ satisfies a Poincaré inequality with constant $2T/c$, and by bounded perturbation we get that $\mu\propto \exp(-U/T)$ satisfies a Poincaré inequality with constant $2T e^{(\max g - \min g)/T}/c$.

It remains to choose $g$. We \nv{choose} $g$ to be non-increasing and convex, so that we bound
\[\na^2 V(x) = - 4 x x^T g''(|x|^2) - 2 I_d g'(|x|^2) \geqslant -4|x|^2 g''(|x|^2) - 2 g'(|x|^2) \,.\]
As in Section~\ref{sec:simplecondition}, we simply take $g'$ as (a $\mathcal C^1$ non-decreasing approximation of) the continuous solution of $g'(0)=-K/2-c/4$ and
\[r g''(r) +  \frac{1}2 g'(r) =  \left\{\begin{array}{ll}
-(K/4+c/8)  & \text{for }r<R^2\\
c/8 & \text{for }r\in(R^2,R_*^2)\\
0 & \text{for }r>R_*^2
\end{array}\right.\]
with $R_* =2R(1+K/c)$.  In other words, $g$ is exactly such as constructed in Section~\ref{sec:simplecondition} with $d=1$. We end up with
\[\max g - \min g \leqslant  (2K+c)R^2(1+K/c)^2\,. \]
As a consequence,
\[C_P(\mu) \leqslant  \frac{2T}{c}\exp\po \frac{(2K+c)R^2(1+K/c)^2}{T}\pf \,.\]

\subsection{\nv{Perturbation} of the drift}\label{sec:perturbation}

This section is devoted to the proof of Proposition~\ref{prop:perturbation}.

Define $\rho$ as
\[\rho(x,y) = |x-y|^\alpha \po T+ \alpha \nv{\omega}(x)\pf\,,\qquad x,y\in\R^d \]
for some $\alpha\geqslant 2$, where $\nv{\omega}$ is a positive bounded function to be chosen. \nv{Let $(X_t)_{t\geqslant 0}$ and $(Y_t)_{t\geqslant 0}$ be the solution respectively of \eqref{eq:EDS} and \eqref{eq:EDSpertubée}} driven by $(B_t)_{t\geqslant 0}$ with some initial condition $\nu$. Then
\begin{eqnarray*}
\lefteqn{\partial_t \mathbb E \po \rho(X_t,Y_t) \pf}\\
 & =& \alpha  \mathbb E\po  |X_t-Y_t|^{\alpha-2} (X_t-Y_t)\cdot \po b(X_t) - \tilde b(Y_t,Z_t) \pf (T+\alpha \nv{\omega}(X_t)) + |X_t-Y_t|^\alpha L\nv{\omega}(X_t)\pf \\
 & \leqslant & \alpha \mathbb E \po |X_t-Y_t|^\alpha \Psi(X_t) +  |X_t-Y_t|^{\alpha-1}(T+\alpha\|\nv{\omega}\|_\infty) | b(Y_t) - \tilde b(Y_t,Z_t)|\pf 
\end{eqnarray*}
with $\Psi$ defined in \eqref{eq:Psi}. Let $\nv{\omega}$ be as defined in Section~\ref{sec:simplecondition}, and $\lambda=c/4$. Then, writing $m(t)=\mathbb E \po \rho(X_t,Y_t) \pf$ and using the Hölder inequality, 
\begin{eqnarray*}
m'(t)&  \leqslant & -\lambda \alpha m(t)
   + \alpha (T+\alpha\|\nv{\omega}\|_\infty) \po \mathbb E \po      |X_t-Y_t|^{\alpha} \pf \pf^{1-1/\alpha}\po \mathbb E \po | b(Y_t) - \tilde b(Y_t,Z_t)|^\alpha\pf\pf^{1/\alpha} \\
  &  \leqslant & -\lambda \alpha m(t) + \alpha \widetilde C(t) m(t)^{1-1/\alpha}
\end{eqnarray*}
with
\[\widetilde C(t) =  \frac{T+\alpha\|\nv{\omega}\|_\infty}{T^{1-1/\alpha}}  \po \mathbb E \po | b(Y_t) - \tilde b(Y_t,Z_t)|^\alpha\pf\pf^{1/\alpha}\,.\]
Then 
\[( m^{1/\alpha})'(t) \leqslant -\lambda  m^{1/\alpha}(t) + \widetilde C(t) \] 
and thus
\[m^{1/\alpha}(t) \leqslant e^{-\lambda t}m^{1/\alpha}(0) + \int_0^t  e^{\lambda (s-t)}\widetilde C(s)\dd s\,.\]
Using the equivalence between $\rho$ and $|x-y|^\alpha$ and taking the infimum over all coupling of the initial conditions yields
\[\mathcal W_\alpha(\nu P_t,\tilde \nu_t) \leqslant M_\alpha e^{-\lambda t} \mathcal W_\alpha(\nu ,\tilde \nu_0)+ T^{-1/\alpha} \int_0^t  e^{\lambda (s-t)}\widetilde C(s)\dd s\,,\] 
which concludes the proof.

\section{Interacting particles at high temperature}\label{sec:particules}

\subsection{General framework}
Let $\bX_t = (X_{1,t},\dots,X_{N,t})$ be a process on $(\R^d)^N$ solving
\[\forall i\in\cco 1,N\ccf\,,\qquad \dd X_{i,t} = F(X_{i,t}) \dd t + G_i(\bX_t) \dd t + \sqrt{2T}\dd B_{i,t}\]
where $(B_{1,t},\dots,B_{N,t})_{t\geqslant 0}$ are $N$ independent standard $d$-dimensional Brownian motions, $F\in\mathcal C^1(\R^d,\R^d)$, $G_i\in \mathcal C^1(\R^{dN},\R^{d})$. In other words, $\bX$ solves \eqref{eq:EDS} with a drift $b$ whose $i^{th}$ $d$-dimensional component is  $b_i(\bx)= F(x_i)+G_i(\bx)$ for all $i\in\cco 1,N\ccf$. \nv{Write $G(\bx)=(G_1(\bx),\dots,G_N(\bx))\in (\R^d)^{N}$.}

\begin{thm}\label{thm:particules}
Assume that there exist $c>0$, $a<c$ and $C_F,C_G,R,M_G\geqslant 0$ such that 
\begin{equation}\label{eq:F}
 (x-y)\cdot (F(x)-F(y)) \leqslant \left\{\begin{array}{ll}
C_F|x-y|^2 & \text{for all } x,y\in\R^d\\
-c|x-y|^2 & \text{for all } x,y\in\R^d \text{ with } |x|\geqslant R 
\end{array}\right.
\end{equation}
and\nv{, for all $\bx,\by\in \R^{dN}$}
\begin{equation}\label{eq:revision}
\sum_{i=1}^N |x_i-y_i||G_i(\bx) - G_i(\by)| \leqslant C_G |\bx-\by|^2 \,,\qquad 
 (\bx-\by)\cdot (G(\bx)-G(\by))  \leqslant  a|\bx-\by|^2
\end{equation}
\nv{and $|G_i(\bx)|\1_{|x_i|\leqslant \nv{R_*}}\leqslant M_G$ for all $i\in\cco 1,N\ccf$ where $R_* = R\po 2+2(C_F+a)/(c-a)\pf^{1/(2d)}$.}

 Writing $K_* = C_F+(c+a)/2$,  assume furthermore that
 \[T \geqslant
 T_0   = \frac{K_*}{d(c-a)} \po  R_*^2\po C_F+C_G\pf + 2 \sup\{-F(x)\cdot x, |x|\leqslant R_*\} +4 M_GR_* \pf \,. \]
 Then the semi-group $P_t$ associated to the process $\bX$ satisfies the $\mathcal W_2$ contraction \eqref{eq:contractW} (with $\alpha=2$) with
 \[M= \sqrt{1+2K_* R_*^2/(Td)}\qquad \lambda = \frac{c-a}{4+8K_* R_*^2/(Td)}\,.\]
\end{thm}

\nv{Notice that the first part of \eqref{eq:revision} implies the second one with $a = C_G$, but in many cases we can have $a<C_G$ (possibly $a\leqslant 0$, see next section) and the result is much more sensible to the value of $a$ (in particular with the condition $a<c$) than to the value of $C_G$. }

\begin{proof}
Consider
\[\rho(\bx,\by) = \sum_{i=1}^N |x_i-y_i|^{2} \po T +  \nv{\omega}(x_i)+  \nv{\omega}(y_i)\pf \]
where $\nv{\omega}$ is a positive function to be chosen. As in Section~\ref{sec:simplecondition}, writing $\mathcal L_s$ the generator on $\R^{dN}\times\R^{dN}$ of a parallel coupling of two processes, we consider separately the leading terms with respect to $T$ and the rest in
\begin{eqnarray*}
\mathcal L_s \rho(\bx,\by) & = & TA+B  
\end{eqnarray*}
with
\begin{eqnarray*}
A &=&  \sum_{i=1}^N \co 2(x_i-y_i) \po F(x_i)+G_i(\bx)-F(y_i)-G_i(\by)\pf + |x_i-y_i|^2 \po \Delta \nv{\omega}(x_i)+\Delta\nv{\omega}(y_i)\pf\cf \\
& \leqslant & \sum_{i=1}^N |x_i-y_i|^2 \po 2a-k_F(x_i)-k_F(y_i) + \Delta \nv{\omega}(x_i)+\Delta\nv{\omega}(y_i)\pf  
\end{eqnarray*}
where $k_F(x_i)=c$ if $|x_i|\geqslant R$ and $k_F(x_i)=-C_F$ otherwise and, \nv{assuming that $\omega$ is constant outside the ball $\{|x|\leqslant R_*\}$,} 
\begin{eqnarray*}
B &=&  \sum_{i=1}^N  \co 2 (x_i-y_i)\cdot \po b_i(\bx)-b_i(\by)\pf \po  \nv{\omega}(x_i)+\nv{\omega}(y_i)\pf + |x_i-y_i|^2\po b_i(\bx)\cdot \na \nv{\omega}(x_i)+b_i(\by)\cdot \na \nv{\omega}(y_i)\pf\cf\\
& \leqslant & \po 4\|\nv{\omega}\|_\infty \po C_F+C_G\pf + 2\sup\{F\cdot\na \nv{\omega}\} + 2M_G\|\na\nv{\omega}\|_\infty\pf |\bx-\by|^2 \,.
\end{eqnarray*}
We take $\nv{\omega}$ as in Section~\ref{sec:simplecondition} but with $K,c$ replaced respectively by $C_F+a$ and $c-a$ \nv{(in particular $\omega$ is indeed constant outside the ball $\{|x|\leqslant R_*\}$)}. The following holds:
\begin{eqnarray*}
\forall x\in\R^d,\qquad a - k_F(x_i) + \Delta \nv{\omega}(x_i) & \leqslant &  - \frac{c-a}{2}\\
\|\nv{\omega}\|_\infty &\leqslant& \frac{K_* R_*^2}{d}  \\
\sup\{F\cdot\na \nv{\omega}\} &\leqslant&  \frac{K_*}{d} \sup\{-F(x)\cdot x, |x|\leqslant R_*\}\\
\|\na \nv{\omega}\|_\infty &\leqslant & 2R_* \frac{K_*}{d}\,.
\end{eqnarray*} 
Hence, provided $T\geqslant T_0$, the previous bounds yield  
\[
\mathcal L_s \rho(\bx,\by) \leqslant -\frac{c-a}2 T|\bx-\by|^2  \leqslant -\frac{c-a}{2+4\|\nv{\omega}\|_\infty/T} \rho(\bx,\by)\,.
\]
The conclusion is now similar to the end of the proof of Theorem~\ref{Thm:main}, using that
\[|\bx-\by|^2 \leqslant \frac1T \rho(\bx,\by) \leqslant \po 1+\frac{2\|\nv{\omega}\|_\infty}T\pf\|\bx-\by|^2\,.\]

\end{proof}

The point of Theorem~\ref{thm:particules} is that if all the constants in the assumption are independent from $N$, then so are $T_0,\lambda$ and $M$. In particular, from Theorem~\ref{Thm:main2}, we get for the invariant measure of the process a Poincaré inequality independent from $N$ (for $T$ large enough). The restriction to a sufficiently high temperature is very natural for interacting particles systems where phase transitions are expected in the behaviour of the Poincaré inequality at low temperature \cite{Tugaut2014}.

A Poincaré constant uniform in $N$ for $T$ large enough is established in \cite{GuillinWuZhang} in a reversible framework with an explicit invariant measure. Although Theorem~\ref{thm:particules} does not require reversibility, on the other hand it needs the interaction force $G$ to be bounded, which is not the case in \cite{GuillinWuZhang} and is a restrictive condition. It is however satisfied in many cases of interest, for instance in adaptive algorithms such as studied in \cite{Clubdes5}, a typical choice is 
\[G_i(\bx) = \frac1N \sum_{i=1}^N \na W(x_i-x_j)\,,\qquad W(x) = e^{-|x|^2}\,,\]
which induces a local repulsion of particles, enhancing the exploration of the state space.

More generally, assume that there exist a graph on $\cco 1,N\ccf$ of degree $D$  and a bounded and Lipschitz function $H\in \mathcal C^1(\R^d\times\R^d,\R^d)$ such that
\[G_i(\bx) = \frac{1}{D}\sum_{j\sim i} H(x_i,x_j)\]
where $i\sim j$ means that $(i,j)$ is an edge of the graph. This is the case for mean field interaction (with the complete graph and $D=N$) or for interaction with closest neighbors in $(\Z/n\Z)^k$ or $\cco 1,n\ccf^k$ (with $i\sim j$ if $|i-j|=1$, $D=2k$). 
 Then $G$ satisfies the assumptions of Theorem~\ref{thm:particules} with $M_G,C_G,a$ which only depend on $H$ (and thus not on the number of particles). For instance, in the particular (reversible) case where the forces are the gradients of some potentials, we get the following corollary of Theorem~\ref{thm:particules}.
 
 \begin{prop}

 Let $V\in\mathcal C^1(\R^d)$, $W\in\mathcal C^1(\R^d\times\R^d)$. Assume that $\na W$ is bounded and Lipschitz and that there exists $c>0$ and $a<c$ such that $\na^2 V \geqslant c>0$ outside a compact set and  $\na^2 W \geqslant -a/2$. Then there exist $T_0,C> 0$ such that the following holds. For all $T\geqslant T_0$, all $N\in\N$,  all graph on $\cco 1,N\ccf$, denoting by $D$ the degree of the graph and considering on $\R^{dN}$ the potential
 \[U(\bx) = \sum_{i=1}^N V(x_i) + \frac{1}{D}\sum_{i=1}^N\sum_{j\sim i} W(x_i-x_j)\,, \]
 then $e^{-U/T}$ is integrable and the probability measure proportional to this density satisfies a Poincaré inequality with constant $C_P\leqslant CT$. 

 \end{prop}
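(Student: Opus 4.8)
The plan is to derive the result from Theorem~\ref{thm:particules} (and then Theorem~\ref{Thm:main2}). The $N$-particle process solves \eqref{eq:EDS} on $\R^{dN}$ with the gradient drift $b=-\na U$, whose $i$-th block is
\[b_i(\bx)=-\na V(x_i)-\frac1D\sum_{j\sim i}\bigl(\na W(x_i-x_j)-\na W(x_j-x_i)\bigr),\]
so we are in the framework of Section~\ref{sec:particules} with $F=-\na V$ and $G_i(\bx)=\frac1D\sum_{j\sim i}H(x_i,x_j)$, where $H(x,y):=\na W(y-x)-\na W(x-y)$. Since $\na W$ is bounded and Lipschitz, $H$ is bounded (by $2\|\na W\|_\infty$) and Lipschitz, with constants depending only on $W$; this is precisely the situation described just before the statement, so the hypotheses on $G$ in Theorem~\ref{thm:particules} hold with $M_G=\|H\|_\infty$ and $C_G$ a fixed multiple of $\mathrm{Lip}(\na W)$. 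For the one-sided bound $(\bx-\by)\cdot(G(\bx)-G(\by))\leqslant a|\bx-\by|^2$ I would not use the crude Lipschitz estimate but the gradient structure $G=-\na_{\bx}U_{\mathrm{int}}$ with $U_{\mathrm{int}}(\bx)=\frac1D\sum_i\sum_{j\sim i}W(x_i-x_j)$: from $\na^2 W\geqslant-a/2$, the $1/D$ normalisation and the bound $\lambda_{\max}(L_{\mathrm{graph}})\leqslant 2D$ on the graph Laplacian, one obtains $\na^2 U_{\mathrm{int}}\geqslant-a\,I_{dN}$ on $\R^{dN}$, hence the required bound. Crucially, $M_G$, $C_G$ and this last constant are independent of $N$ and of the graph.

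Next I would check \eqref{eq:F} for $F=-\na V$. Let $C_F\geqslant 0$ be a global lower bound for $-\na^2 V$ (finite, since $\na^2 V\geqslant c$ outside a compact set and is continuous); then $(x-y)\cdot(F(x)-F(y))\leqslant C_F|x-y|^2$ for all $x,y$. For the far-field half of \eqref{eq:F} I would argue as in Section~\ref{sec:localCondition}: when $|x|$ is large the segment $[y,x]$ meets the compact set on a set of parameters of Lebesgue measure $O(1/|x|)$, so $(x-y)\cdot(F(x)-F(y))\leqslant-c'|x-y|^2$ for all $y$ as soon as $|x|\geqslant R$, for any fixed $c'<c$ and $R=R(c',V)$ large enough. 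Since $a<c$ we may fix $c'\in(a,c)$, so the ordering required by Theorem~\ref{thm:particules} holds. The quantities $R_*,K_*,T_0,M,\lambda$ produced by that theorem are then built from $c',a,C_F,C_G,M_G,R,d$ only — not from $N$ or $D$ — so there is $T_0<\infty$ such that, for every $T\geqslant T_0$, every $N$ and every graph, $P_t$ satisfies the $\mathcal W_2$ contraction \eqref{eq:contractW}, with moreover $\lambda\geqslant\lambda_0:=\frac{c'-a}{4+8K_*R_*^2/(T_0d)}>0$ uniformly in $N$ and in $T\geqslant T_0$ (using $K_*R_*^2/(Td)\leqslant K_*R_*^2/(T_0d)$).

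It remains to pass to the Poincaré inequality. First, $e^{-U/T}$ is integrable: $\na^2 V\geqslant c$ off a compact set forces $V(x)\geqslant\frac c4|x|^2-M_1$, while $\na W$ bounded forces $|W(z)|\leqslant M_2(1+|z|)$, whence $U(\bx)\geqslant\frac c8|\bx|^2-C(N)$ for all $\bx$; thus $\mu_{N,T}\propto e^{-U/T}$ is a genuine probability measure, reversible for $P_t$. The same estimate gives $b(\bx)\cdot\bx\leqslant-\varepsilon|\bx|^2$ outside a (possibly $N$-dependent) ball, i.e.\ the Gaussian-moment Lyapunov bound which — together with smoothness of $V,W$, so that $b\in\A$ — is all that the proof of Theorem~\ref{Thm:main2} uses besides the $\mathcal W_2$ contraction itself. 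Since $b=-\na U$ is a gradient, part~(4) of Theorem~\ref{Thm:main2} yields $C_P(\mu_{N,T})\leqslant T/\lambda\leqslant T/\lambda_0$, which is the assertion with $C:=1/\lambda_0$.

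The only genuinely non-routine point is the uniformity bookkeeping: one has to make sure that \emph{every} constant arising along the way — in particular the one in the one-sided bound on $G$ and the radius $R$ in \eqref{eq:F} — stays free of $N$ and $D$. This is exactly what the $1/D$ normalisation of the interaction (through $\lambda_{\max}(L_{\mathrm{graph}})\leqslant 2D$) and the slack between $a$ and $c$ (with the factor $1/2$ in $\na^2 W\geqslant-a/2$) are designed to provide; once this is checked the proposition follows directly from Theorem~\ref{thm:particules} and Theorem~\ref{Thm:main2}.
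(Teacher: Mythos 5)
Your proof follows exactly the route the paper intends (the paper gives no explicit argument for this Proposition, presenting it as a direct corollary of Theorem~\ref{thm:particules} and of the graph-interaction discussion immediately preceding it): identify $F=-\na V$ and $G_i(\bx)=\frac1D\sum_{j\sim i}H(x_i,x_j)$ with $H(x,y)=\na W(y-x)-\na W(x-y)$, verify the hypotheses of Theorem~\ref{thm:particules} with constants independent of $N$ and $D$, deduce the $\mathcal W_2$ contraction, and pass to the Poincaré bound via the reversible case~(4) of Theorem~\ref{Thm:main2}. The key observations — exploiting the gradient structure $G=-\na U_{\mathrm{int}}$ together with a bound on the graph Laplacian to obtain an $N$- and $D$-free one-sided estimate, converting $\na^2 V\geqslant c$ off a compact set into the far-field half of \eqref{eq:F} by the segment argument of Section~\ref{sec:localCondition}, and tracking that the resulting $R_*,K_*,T_0,\lambda$ involve only data from the hypotheses — are the right ones, and your writeup is in fact more detailed than the paper's.

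One bookkeeping subtlety: the double sum $\sum_{i}\sum_{j\sim i}$ in $U$ counts each edge twice, so the estimate for $\na^2 U_{\mathrm{int}}$ reads
\[\mathbf{u}^T\na^2 U_{\mathrm{int}}\mathbf{u}=\frac1D\sum_{(i,j):\,j\sim i}(u_i-u_j)^T\na^2W(x_i-x_j)(u_i-u_j)\geqslant-\frac{a}{2D}\cdot 2\,\mathbf{u}^TL_{\mathrm{graph}}\mathbf{u}\geqslant-2a\,|\mathbf{u}|^2\,,\]
giving the one-sided constant $2a$ rather than the $a$ you claim: the factor $2$ in $\lambda_{\max}(L_{\mathrm{graph}})\leqslant 2D$ and the factor $2$ from double-counting both appear, and the $1/2$ in the hypothesis $\na^2 W\geqslant -a/2$ only cancels one of them. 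For the complete graph $\lambda_{\max}(L_{\mathrm{graph}})=D$ and the factor vanishes, but for bipartite graphs the bound $2D$ is attained. This does not change the structure of your argument and affects only constants, but as written the hypothesis $a<c$ then needs to be read as $2a<c$ for arbitrary graphs (equivalently, one should strengthen the hypothesis to $\na^2 W\geqslant -a/4$, or interpret the double sum as running over unordered edges). A lesser point: Theorem~\ref{Thm:main2} as used assumes $b\in\A$, which $V,W\in\mathcal C^1$ does not literally give; this regularity gap is shared by the paper's own framework and is resolved by approximation.
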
 
 
 This is a result in the spirit of \cite[Theorem 1]{GuillinWuZhang}.
 
 \subsection{The mean-field case and propagation of chaos}
 
Let us now focus on the case of mean field interactions. More precisely, we work under the following condition:

\begin{assu}\label{AssuChaos}
The drift $b$ on $\R^{dN}$ is of the form $b_i(\bx) = F(x_i) + \frac1N \sum_{i=1}^N H(x_i,x_j)$ where  $F\in\mathcal C^1 (\R^d,\R^d)$ and $H\in\mathcal C^1(\R^d\times\R^d,\R^d)$. Moreover, there exist $c>0$, $a<c$ and $C_F,R,C_G,M_G\geqslant 0$ such that $F$ satisfies \eqref{eq:F}, $H$ is $2C_H/3$-Lipschitz continuous and for all $x,y,x',y'\in\R^d$, $\1_{|x|\leqslant R} H(x,y)\leqslant M_H$ and
\begin{equation}\label{eq:conditionH}
 (x-y)\cdot (H(x,x')-H(y,y')) + (x'-y')\cdot (H(x',x)-H(y',y)) \leqslant a\po |x-y|^2 + |x'-y'|^2\pf\,.
\end{equation}
Finally, $T\geqslant T_0$, where $T_0$ is given in Theorem~\ref{thm:particules}. 
\end{assu}

It is straightforward to check that this condition implies the assumptions of Theorem~\ref{thm:particules}. As soon as $H$ is $2C_H/3$-Lipschitz, \eqref{eq:conditionH} holds with $a=C_H$, and the condition $a<c$ is then satisfied if the interaction is sufficiently small. However, in some cases, $a$ may be smaller than $C_H$, in particular, in the usual case where $H(x,y)=\tilde H(x-y)=-\tilde H(y-x)$ for some $\tilde H$, the condition~\eqref{eq:conditionH} reads
\[ (x-y-x'+y')\cdot (\tilde H(x-x')-\tilde H(y-y'))  \leqslant a\po |x-y|^2 + |x'-y'|^2\pf\,,\]
and this holds with $a=0$ if $x\cdot \tilde H(x) \leqslant 0$ for all $x\in\R^d$. This is the case for instance for $\tilde H(x) = -\na W(x)$ with $W(x) = \gamma \sqrt{1+|x|^2}$, for any $\gamma\geqslant 0$. Since $\na W$ is bounded, in this case, Theorem~\ref{thm:particules} applies whatever the value of $\gamma$, i.e. even if the interaction force is not small with respect to the confining force (however, as $\gamma$ increases, so does the temperature $T_0$).

As $N\rightarrow +\infty$, according to the propagation of chaos phenomenon, it is well-known  that two given particles of the system behave like independent McKean-Vlasov processes solving
\begin{equation}\label{eq:EDSMcKV}
\dd \bar X_t = F(\bar X_t) \dd t + \int_{\R^d} H(\bar X_t,z) \rho_t(\dd z) \dd t + \sqrt{2T} \dd B_t\,,\qquad \bar \nu_t = \mathcal Law(\bar X_t)\,.
\end{equation}
In other words, $\bar \nu_t$ solves the non-linear equation
\begin{equation}\label{eq:McKeanVlasov}
\partial_t \bar \nu_t = \na\cdot \po T\na\bar \nu_t - \po F+ (H\ast \bar \nu_t)\pf\bar \nu_t\pf \,,
\end{equation}
where $ H\ast\bar  \nu (x) = \int_{\R^d} H(x,y)\bar  \nu(\dd y)$. The existence, uniqueness of the process \eqref{eq:EDSMcKV} and of the solution of the equation \eqref{eq:McKeanVlasov}, together with time-dependent propagation of chaos estimates, follow from standard arguments \cite{Sznitman,Meleard} for initial conditions $\bar\nu_0$ in $\mathcal P_2(\R^d)$ the set of probability measures on $\R^d$ with finite second moment.

With a $\mathcal W_2$ Wasserstein contraction such as given by Theorem~\ref{thm:particules} at hand, it is straightforward to obtain time-uniform propagation of chaos and a Waserstein contraction for the limit equation.

\begin{thm}\label{thm:champmoyen}
Under Assumption~\ref{AssuChaos}, there exist (explicit) constants $\alpha,\beta>0$ such that the following holds. For $N\in\N$, let $P_t^N$ be the semi-group associated to $L= b\cdot\na + T\Delta$ on $\R^{dN}$ and let $\bar \nu_t$ be a solution of  \eqref{eq:McKeanVlasov}. Then, for all $t\geqslant 0$,
\[\mathcal W_2 \po \nu P_t^N , \bar  \nu_t^{\otimes N}\pf \leqslant \ M e^{-\lambda t} \mathcal W_2 \po \nu ,  \bar \nu_0^{\otimes N}\pf + \alpha e^{-\frac{c-a}{2}t}\sqrt{\int_{\R^d} |y|^2 \bar \nu_0(\dd y)} + \beta \,, \]
where $M$ and $\lambda$ are given in Theorem~\ref{thm:particules}.
\end{thm}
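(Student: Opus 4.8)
The plan is to realise $\bar\nu_t^{\otimes N}$ as the law of a process obtained by \emph{perturbing} the drift of the $N$-particle system, and then to reuse the argument of Section~\ref{sec:perturbation} in dimension $dN$, with the modified distance built in the proof of Theorem~\ref{thm:particules}. Concretely, for $i\in\cco 1,N\ccf$ let $\bar X_{i,\cdot}$ solve the McKean-Vlasov equation~\eqref{eq:EDSMcKV} driven by the same Brownian motion $B_{i,\cdot}$ as the $i$-th particle, the initial conditions $\bar X_{i,0}$ being i.i.d.\ of law $\bar\nu_0$ and coupled to $\bX_0\sim\nu$ through a $\mathcal W_2$-optimal coupling. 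Being driven by independent Brownian motions and starting from independent positions, the $\bar X_{i,\cdot}$ are independent, so the law of $\bar\bX_t=(\bar X_{1,t},\dots,\bar X_{N,t})$ is exactly $\bar\nu_t^{\otimes N}$; moreover $\bar\bX$ solves the $N$-particle SDE driven by $(B_{i,\cdot})_i$ in which the drift component $b_i(\bx)=F(x_i)+\tfrac1N\sum_{j}H(x_i,x_j)$ has been replaced by $\tilde b_i(\bar\bX_t)=F(\bar X_{i,t})+\int_{\R^d}H(\bar X_{i,t},y)\bar\nu_t(\dd y)$.

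I would first rerun the computation of Section~\ref{sec:perturbation} with $\rho(\bx,\by)=\sum_{i=1}^N|x_i-y_i|^2\bigl(T+2\kappa(x_i)\bigr)$, where $\kappa$ and the constants $\lambda,M$ are those of Theorem~\ref{thm:particules} (which applies under Assumption~\ref{AssuChaos}). Along the synchronous coupling of $\bX$ and $\bar\bX$, only the drift of $\bX$ enters the $\kappa$-terms of $\mathcal L_s\rho$, so the quantity to be controlled is the one already estimated in the proof of Theorem~\ref{thm:particules}, up to the single new contribution created by $b_i(\bar\bX_t)-\tilde b_i(\bar\bX_t)$; H\"older's inequality then yields, for all $t\geqslant 0$,
\[\mathcal W_2\bigl(\nu P_t^N,\bar\nu_t^{\otimes N}\bigr)\ \leqslant\ M e^{-\lambda t}\,\mathcal W_2\bigl(\nu,\bar\nu_0^{\otimes N}\bigr)\ +\ M^2\int_0^t e^{\lambda(s-t)}\,\mathcal E(s)^{1/2}\,\dd s\,,\]
with $\mathcal E(s)=\sum_{i=1}^N\mathbb E\Bigl|\tfrac1N\sum_{j=1}^N H(\bar X_{i,s},\bar X_{j,s})-\int_{\R^d}H(\bar X_{i,s},y)\bar\nu_s(\dd y)\Bigr|^2$ the mean-field fluctuation along the limiting flow.

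The second ingredient is the (routine) law-of-large-numbers bound on $\mathcal E(s)$, uniform in $N$ and in $t$. Fixing $i$ and conditioning on $\bar X_{i,s}$, the variables $H(\bar X_{i,s},\bar X_{j,s})$ for $j\neq i$ are i.i.d.\ and independent of $\bar X_{i,s}$, with conditional mean $\int H(\bar X_{i,s},y)\bar\nu_s(\dd y)$; isolating the diagonal term $\tfrac1N H(\bar X_{i,s},\bar X_{i,s})$, bounding the conditional variance of the remaining empirical average by $\tfrac1N$ times a second moment, and using $|H(x,y)|^2\leqslant c_0(1+|x|^2+|y|^2)$ (which follows from the Lipschitz continuity of $H$, with $c_0$ explicit in $C_H$ and $|H(0,0)|$), one finds that each of the $N$ summands of $\mathcal E(s)$ is at most $\tfrac{c_1}{N}\bigl(1+\int_{\R^d}|y|^2\bar\nu_s(\dd y)\bigr)$ for an explicit $c_1$, whence $\mathcal E(s)\leqslant c_1\bigl(1+\int_{\R^d}|y|^2\bar\nu_s(\dd y)\bigr)$. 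It remains to control $\int|y|^2\bar\nu_s(\dd y)$ uniformly in time: a Lyapunov estimate based on~\eqref{eq:F} for $F$, on~\eqref{eq:conditionH} (which implies that $x\mapsto\int H(x,y)\bar\nu_s(\dd y)$ is $a$-monotone) and on the Lipschitz and local boundedness of $H$ gives a differential inequality $\tfrac{\dd}{\dd t}\mathbb E|\bar X_t|^2\leqslant -(c-a)\mathbb E|\bar X_t|^2+C$ for $|\bar X_t|$ large, hence $\int_{\R^d}|y|^2\bar\nu_t(\dd y)\leqslant C_\infty+e^{-(c-a)t}\int_{\R^d}|y|^2\bar\nu_0(\dd y)$ for an explicit $C_\infty>0$.

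Inserting these bounds, $\mathcal E(s)^{1/2}$ is at most a constant plus $\sqrt{c_1}\,e^{-\frac{c-a}{2}s}\bigl(\int_{\R^d}|y|^2\bar\nu_0(\dd y)\bigr)^{1/2}$, so $M^2\int_0^t e^{\lambda(s-t)}\mathcal E(s)^{1/2}\dd s$ splits into a part bounded by a constant $\beta$ (coming from the stationary level $C_\infty$) and a part of the announced form $\alpha\,e^{-\frac{c-a}{2}t}\bigl(\int_{\R^d}|y|^2\bar\nu_0(\dd y)\bigr)^{1/2}$ (coming from the transient), with $\alpha,\beta$ explicit in the data of Assumption~\ref{AssuChaos} and in $T$; combined with the first term of the displayed inequality, this is the claim. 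The delicate point is not any single estimate — the particle-level $\mathcal W_2$ contraction of Theorem~\ref{thm:particules}, the $O(1/N)$ fluctuation bound, and the Lyapunov moment bound are all standard — but the fact that, precisely because $c>a$, all the constants can be chosen simultaneously independent of $N$ and uniform in $t$; this is what makes the propagation of chaos both time-uniform and, after dividing by $\sqrt N$, dimension-free at the level of any fixed number of tagged particles.
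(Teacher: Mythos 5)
Your proof matches the paper's essentially line for line: both realise $\bar\nu_t^{\otimes N}$ by running $N$ independent copies of~\eqref{eq:EDSMcKV} driven by the same Brownian motions as the particle system, view this as a drift perturbation of the $N$-particle SDE, run the perturbation argument with the product weight $\rho(\bx,\by)=\sum_i|x_i-y_i|^2\bigl(T+2\kappa(x_i)\bigr)$ from Theorem~\ref{thm:particules}, bound the mean-field fluctuation via conditional independence and centering of $H(\bar X_{i},\bar X_{j})-H\ast\bar\nu(\bar X_{i})$ over $j\neq i$, control $\int|y|^2\bar\nu_t(\dd y)$ by a Lyapunov estimate built from~\eqref{eq:F} and~\eqref{eq:conditionH}, and finally integrate the differential inequality for $\sqrt{m(t)}$. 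The only superficial difference is that you bound $\mathbb E|A_j|^2$ through a growth estimate $|H(x,y)|^2\lesssim 1+|x|^2+|y|^2$, while the paper uses the Lipschitz property directly to get $L_H^2\,\mathbb E\int|Y_j-y|^2\bar\nu_t(\dd y)$, a slightly tighter bound but with identical dependence on $N$ and the second moment.

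One point you do not actually verify, and which deserves a second look, is the decay rate of the transient term. Since $\lambda\leqslant (c-a)/4<(c-a)/2$, the integral
\[\int_0^t e^{\lambda(s-t)}e^{-\frac{c-a}{2}s}\,\dd s=\frac{e^{-\lambda t}-e^{-\frac{c-a}{2}t}}{\frac{c-a}{2}-\lambda}\]
decays at the \emph{slower} rate $e^{-\lambda t}$, not $e^{-\frac{c-a}{2}t}$; writing the transient contribution as $\alpha\,e^{-\frac{c-a}{2}t}\bigl(\int|y|^2\bar\nu_0\bigr)^{1/2}$ is therefore not justified by this computation (the natural conclusion is with exponent $\lambda$). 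This is not a flaw in your strategy — the paper's own proof contains the same step $\int_0^t e^{\lambda(s-t)}e^{-\frac{c-a}{2}s}\dd s\leqslant e^{-\frac{c-a}{2}t}/(\tfrac{c-a}{2}-\lambda)$, which fails for large $t$ — but by asserting the ``announced form'' without carrying out the integral you inherit this slip rather than catching it.
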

\begin{proof}
The proof is essentially the same as the proof of Proposition~\ref{prop:perturbation}, except that we consider a \nv{cost} $\rho$ as in the proof of Theorem~\ref{thm:particules}, namely
\[\rho(\bx,\by) = \sum_{i=1}^N |x_i-y_i|^2 \po T+2\nv{\omega}(x_i)\pf \,.\]
Let $(\bX_t)_{t\geqslant 0}$ be a system of particles with drift $b$ and initial condition $\nu$ and  $\bY=(Y_1,\dots,Y_N)$ be solutions of \eqref{eq:EDSMcKV} (with the same Brownian motions as $\bX$) with initial condition  $ \nu_0^{\otimes N}$. In particular, $\bY_t \sim  \nv{\bar \nu}_t^{\otimes N}$ for all $t\geqslant 0$. As in the proof of Proposition~\ref{prop:perturbation}, writing $m(t) = \mathbb E\po \rho(\bX_t,\bY_t)\pf$, we get
\begin{equation}\label{eq:demochaos}
m'(t) \leqslant -2\lambda m(t) + 2 (T+2\|\nv{\omega}\|_\infty) \sum_{i=1}^N \sqrt{ \mathbb E \po |X_{i,t} - Y_{i,t}|^2\pf \hat C(t)  }
\end{equation}
where $\lambda$ is given in Theorem~\ref{thm:particules} and, using that the $Y_j$'s all have the same law,
\begin{eqnarray*}
\hat C(t)& =& \mathbb E\po \left|\frac1N\sum_{j=1}^N H(Y_{1,t},Y_{j,t}) - H\ast \bar \nu_t(Y_{1,t})\right|^2\pf\,.
\end{eqnarray*}
 Developing the square and using that the variables $A_j:= H(Y_{1,t},Y_{j,t}) - H\ast \bar \nu_t(Y_{1,t})$ for $j\neq 1$  are independent and centered, we get
\begin{eqnarray*}
\hat C(t)& \leqslant & \frac1{N^2}  \mathbb E \po \sum_{j=1}^N   |A_j|^2 + A_1\cdot \sum_{j\neq 1} A_j \pf    \\
& \leqslant & \frac1{N}  \mathbb E \po |A_1|^2 + |A_2|^2 \pf\,. 
\end{eqnarray*}
Then we bound
\begin{eqnarray*}
\mathbb E \po |A_j|^2 \pf &= & \mathbb E\po \left| \int_{\R^d} (H(Y_{1,t},Y_{j,t}) - H(Y_{1,t},y) ) \bar \nu_t(\dd y)\right|^2 \pf \\
& \leqslant & L_H^2 \mathbb E \po \int_{\R^d} \left| Y_{j,t}-y \right|^2  \bar \nu_t(\dd y)  \pf \ = \ 2 L_H^2 \po  \int_{\R^d} |y|^2 \nv{\bar \nu}_t(\dd y) - \left|\int_{\R^d} y \nv{\bar \nu}_t(\dd y)\right|^2\pf 
\end{eqnarray*}
and thus, 
\[\hat C(t) \leqslant \frac{4L_H^2}{N} \int_{\R^d} |y|^2 \nv{\bar \nu}_t(\dd y)\,.\]
Under Assumption~\ref{AssuChaos}, for all $y,y'\in\R^d$,
\begin{eqnarray*}
y\cdot F(y)& \leqslant&  |y||F(0)| - c|y|^2 +(C_F+c)R^2 \\
y\cdot  H(y,y') + y' \cdot  H(y',y)  &\leqslant &a \po |y|^2+|y'|^2\pf + \po|y|+|y'|\pf |H(0,0)|
\end{eqnarray*}
from which
\begin{eqnarray*}
\partial_t   \int_{\R^d} |y|^2 \nv{\bar \nu}_t(\dd y) & = &   2\int_{\R^d\times\R^d} \co  y \cdot \po F(y) + H(y,y') \pf + T d \cf \nv{\bar \nu}_t(\dd y)\nv{\bar \nu} _t(\dd y')\\
 & \leqslant & - (c-a) \int_{\R^d} |y|^2 \nv{\bar \nu}_t(\dd y)  +Q
\end{eqnarray*}
with
\[Q = Td +  2(C_F+c)R^2 + \frac{1}{c-a}\po  |H(0,0)|+|F(0)|\pf^2\,.\]
Integrating in time,
\[\int_{\R^d} |y|^2 \nv{\bar \nu}_t(\dd y) \leqslant e^{-(c-a)t} \int_{\R^d} |y|^2 \nv{\bar \nu}_0(\dd y) +   Q\]
Going back to \eqref{eq:demochaos},
\begin{eqnarray*}
m'(t) &\leqslant& -2\lambda m(t) + 2 (T+2\|\nv{\omega}\|_\infty) \sqrt{ N\sum_{i=1}^N  \mathbb E \po |X_{i,t} - Y_{i,t}|^2\pf \hat C(t)  }\\
& \leqslant & -2\lambda m(t) + 4 \frac{L_H(T+2\|\nv{\omega}\|_\infty)}{\sqrt T} \sqrt{m(t)\po e^{-(c-a)t}  \int_{\R^d} |y|^2 \nv{\bar \nu}_0(\dd y) +  Q\pf }\\
& := & -2\lambda m(t) + 2\sqrt{m(t) \po e^{-(c-a)t}A^2+B^2\pf }\,,
\end{eqnarray*}
hence
\[ (\sqrt{m})'(t) \leqslant -\lambda \sqrt{m(t)} + \sqrt{ e^{-(c-a)t}A^2+B^2 }  \leqslant -\lambda \sqrt{m(t)} + e^{-(c-a)t/2}A+B \,.\] 
Integrating in time (and noticing that $\lambda \leqslant (c-a)/4$),
\[\sqrt{m(t)} \leqslant e^{-\lambda t} \sqrt{m(0)} + \int_0^t e^{\lambda(s-t)}\po e^{-(c-a)s/2} A+ B\pf\dd s  \leqslant e^{-\lambda t} \sqrt{m(0)}  + \frac{e^{-(c-a)t/2}}{(c-a)/2-\lambda}A   +  \frac{1}{\lambda} B \,,\]
and the proof is concluded by using the equivalence between $\rho$ and the Euclidean norm and taking the infimum over the couplings of the initial distributions. 
\end{proof}

 Theorem~\ref{thm:champmoyen} has the following consequences.
 
 \begin{cor}
 Under Assumption~\ref{AssuChaos}, considering $M,\lambda,\alpha,\beta$ as in Theorem~\ref{thm:champmoyen}, then, for all $N\in \N$, $k\in\cco 1,N\ccf$ and all $\bar \nu_0,\bar \mu_0 \in \mathcal P_2(\R^d)$, the following holds. Let $(\bX_t)_{t\geqslant 0}$ be a system of $N$ interacting particles on $\R^{dN}$ with drift $b$ and with initial condition $\bar \nu_0^{\otimes N}$ and  denote by $\nu_t^{k,N}$ the law of $(X_{1,t},\dots, X_{k,t})$. Let $\bar \nu_t,\bar \mu_t$ be the solutions of \eqref{eq:McKeanVlasov} with respective initial conditions $\bar\nu_0,\bar\mu_0$. Then, for all $t\geqslant 0$,
 \begin{eqnarray*}
\mathcal W_2\po \nu_t^{k,N},\bar \nu_t^{\otimes k}\pf &\leqslant& \sqrt{\frac{k}{N}} \po \alpha e^{-\frac{c-a}{2}t}\sqrt{\int_{\R^d} |y|^2  \bar \nu_0(\dd y)} + \beta\pf  \\
 \mathcal W_2\po \bar \mu_t,\bar \nu_t\pf &\leqslant&  M e^{-\lambda t} \mathcal W_2\po \bar \mu_0,\bar \nu_0\pf  
 \end{eqnarray*}
 and there exists a unique stationary solution to  \eqref{eq:McKeanVlasov} in $\mathcal P_2(\R^d)$.
 
Moreover, if $\int_{\R^d} |y|^5 \bar \nu_0(\dd y)<+\infty$ then  for all $t\geqslant 0$,
\[ \mathbb E \po \mathcal W_2^2 \po \frac1N  \sum_{i=1}^N \delta_{X_{i,t}}, \bar\nu_t\pf \pf \leqslant C'\alpha(N)\po  e^{-(c-a)t}\int_{\R^d} |y|^5 \bar \nu_0(\dd y) + 1\pf^{2/5}\,. \] 
where $C'$ is a constant which depends only on $d$ and  on the parameters of Assumption~\ref{AssuChaos}, and 
\[\alpha(N) = \left\{\begin{array}{ll}
N^{-1/2} & \text{if } d<4\\
N^{-1/2}\ln(1+N) & \text{if }d=4\\
N^{-2/d} & \text{if }d>4.
\end{array}\right.\]
 
 \end{cor}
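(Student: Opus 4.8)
The plan is to derive the corollary from Theorem~\ref{thm:champmoyen} by combining the $\mathcal W_2$ contraction with exchangeability arguments and standard concentration bounds for empirical measures. First, for the $k$-marginal estimate, I would use exchangeability of the particles together with the general fact that if $\bX_t\sim\nu_t^{N,N}$ and $\bY_t\sim\bar\nu_t^{\otimes N}$ are coupled so that $\mathcal W_2^2(\nu_t^{N,N},\bar\nu_t^{\otimes N})=\mathbb E|\bX_t-\bY_t|^2$, then by symmetry $\mathbb E|X_{1,t}-Y_{1,t}|^2+\dots+\mathbb E|X_{k,t}-Y_{k,t}|^2 = \frac{k}{N}\mathbb E|\bX_t-\bY_t|^2$, which gives $\mathcal W_2^2(\nu_t^{k,N},\bar\nu_t^{\otimes k})\leqslant \frac kN \mathcal W_2^2(\nu_t^{N,N},\bar\nu_t^{\otimes N})$. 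Applying Theorem~\ref{thm:champmoyen} with $\nu=\bar\nu_0^{\otimes N}$, the contraction term vanishes (the initial distance is zero) and I am left with $\mathcal W_2(\nu_t^{k,N},\bar\nu_t^{\otimes k})\leqslant \sqrt{k/N}\big(\alpha e^{-(c-a)t/2}\sqrt{\int|y|^2\bar\nu_0}+\beta\big)$, as claimed.

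Next, the contraction $\mathcal W_2(\bar\mu_t,\bar\nu_t)\leqslant Me^{-\lambda t}\mathcal W_2(\bar\mu_0,\bar\nu_0)$ for the nonlinear flow: here I would apply Theorem~\ref{thm:champmoyen} twice, once with initial condition $\nu=\bar\nu_0^{\otimes N}$ and once with $\nu=\bar\mu_0^{\otimes N}$, using the triangle inequality
\[\mathcal W_2(\bar\mu_t^{\otimes N},\bar\nu_t^{\otimes N})\leqslant \mathcal W_2(\bar\mu_t^{\otimes N},\mu P_t^N)+\mathcal W_2(\mu P_t^N,\nu P_t^N)+\mathcal W_2(\nu P_t^N,\bar\nu_t^{\otimes N}),\]
where $\mu=\bar\mu_0^{\otimes N}$, $\nu=\bar\nu_0^{\otimes N}$. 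The middle term is bounded by $Me^{-\lambda t}\mathcal W_2(\bar\mu_0^{\otimes N},\bar\nu_0^{\otimes N})=Me^{-\lambda t}\sqrt N\,\mathcal W_2(\bar\mu_0,\bar\nu_0)$ using the tensorization identity $\mathcal W_2^2(\bar\mu_0^{\otimes N},\bar\nu_0^{\otimes N})=N\mathcal W_2^2(\bar\mu_0,\bar\nu_0)$, and the outer two terms are $O(\sqrt N)$ propagation-of-chaos errors by the first part (with $k=N$). Dividing the resulting bound on $\mathcal W_2(\bar\mu_t^{\otimes N},\bar\nu_t^{\otimes N})=\sqrt N\,\mathcal W_2(\bar\mu_t,\bar\nu_t)$ by $\sqrt N$ and letting $N\to\infty$ kills the chaos errors and leaves exactly $Me^{-\lambda t}\mathcal W_2(\bar\mu_0,\bar\nu_0)$. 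Existence and uniqueness of a stationary solution then follow from the contraction: uniqueness is immediate, and existence follows because the contraction makes $(\bar\nu_t)$ a Cauchy family as $t\to\infty$ in the complete space $(\mathcal P_2,\mathcal W_2)$ (after checking a uniform second moment bound, which comes from the Lyapunov estimate $\int|y|^2\bar\nu_t\leqslant e^{-(c-a)t}\int|y|^2\bar\nu_0+Q$ already derived in the proof of Theorem~\ref{thm:champmoyen}).

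Finally, for the empirical measure estimate I would combine the $k=N$ case (or rather a direct coupling) with a quantitative law of large numbers for i.i.d.\ samples in Wasserstein distance. Writing $\bar\mu_t^N=\frac1N\sum_i\delta_{Y_{i,t}}$ for the empirical measure of $N$ i.i.d.\ copies $Y_{i,t}\sim\bar\nu_t$ used in the coupling, I split
\[\mathbb E\,\mathcal W_2^2\Big(\tfrac1N\sum_i\delta_{X_{i,t}},\bar\nu_t\Big)\leqslant 2\,\mathbb E\,\mathcal W_2^2\Big(\tfrac1N\sum_i\delta_{X_{i,t}},\tfrac1N\sum_i\delta_{Y_{i,t}}\Big)+2\,\mathbb E\,\mathcal W_2^2(\bar\mu_t^N,\bar\nu_t).\]
The first term is at most $\frac2N\sum_i\mathbb E|X_{i,t}-Y_{i,t}|^2=O(1/N)$ times the chaos bound squared, and the second is handled by the Fournier--Guillin rates for empirical measures of i.i.d.\ samples, which require a moment of order strictly larger than $4$ (hence the assumption $\int|y|^5\bar\nu_0<\infty$) and produce exactly the rate $\alpha(N)$; the $5$th moment of $\bar\nu_t$ is propagated uniformly in time by the same Lyapunov-type argument as for the $2$nd moment, giving the $\int|y|^5\bar\nu_0$ dependence with an $e^{-(c-a)t}$ decay. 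The main obstacle I anticipate is the bookkeeping in the last step: one must carefully track how the uniform-in-time $5$th moment bound for $\bar\nu_t$ feeds into the Fournier--Guillin constant and combine it cleanly with the $O(1/N)$ coupling error to land on the stated form $C'\alpha(N)(e^{-(c-a)t}\int|y|^5\bar\nu_0+1)^{2/5}$; the other two parts are essentially immediate consequences of Theorem~\ref{thm:champmoyen} and tensorization.
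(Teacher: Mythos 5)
Your proposal is essentially correct and follows the same structure as the paper's proof: exchangeability for the $k$-marginal bound, the $\mathcal W_2$ contraction of the $N$-particle system plus propagation of chaos and a limit $N\to\infty$ for the nonlinear contraction, a contraction/completeness argument for the stationary solution, and the split into coupling error plus Fournier--Guillin rate with a propagated fifth moment for the empirical measure estimate.

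Two remarks on where your routing differs from the paper's, plus one imprecision worth flagging. For the nonlinear contraction, you work at the level of the $N$-fold product measures and use the tensorization identity $\mathcal W_2^2(\bar\mu^{\otimes N},\bar\nu^{\otimes N})=N\,\mathcal W_2^2(\bar\mu,\bar\nu)$ together with a three-term triangle inequality; the paper instead passes directly through the $1$-particle marginal, using that any coupling of the full systems induces a coupling of the first coordinates with squared cost at most $1/N$ of the total, then applies Theorem~\ref{thm:particules} and a product coupling of the initial laws, and finally lets $N\to\infty$ with the $k=1$ chaos bound. Both work and are of comparable length; the paper's version avoids invoking the tensorization equality. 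For the stationary solution, you argue via the Cauchy criterion for $\bar\nu_t$ as $t\to\infty$ (using the uniform second moment bound), while the paper applies the Banach fixed point theorem to $\Phi_t$ for a fixed large $t$ and then uses that the $\Phi_s$ commute; again both are valid. Finally, note that your phrase ``the outer two terms are $O(\sqrt N)$ propagation-of-chaos errors'' is inaccurate: applying the first claim with $k=N$ shows those terms are $O(1)$ uniformly in $N$ (the prefactor $\sqrt{k/N}$ becomes $1$). This matters, because if they really were $O(\sqrt N)$ then dividing by $\sqrt N$ would leave an $O(1)$ remainder and the $N\to\infty$ limit would not kill them; the argument goes through precisely because they are $O(1)$, hence $o(\sqrt N)$.
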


In the last claim we assumed a finite $5^{th}$ moment to get a simple statement but, as can be seen in the proof and from the results of \cite{FournierGuillin}, a similar result would hold assuming only a $q^{th}$ finite moment for any $q>2$. If only a second moment is available, we still get a similar result if $\mathcal W_2^2$ is replaced by $\mathcal W_p^p$ for any $p<2$.

\begin{proof}
Using the interchangeability of particles and that any coupling of $\nu P_t$ and $\bar \nu_t^{\otimes N}$ gives a coupling of the $k$ first particles immediatly yield
\[\mathcal W_2\po \nu_t^{k,N},\bar \nu_t^{\otimes k}\pf \leqslant \sqrt{\frac kN}\mathcal W_2\po \bar \nu_0^{\otimes N} P_t^N,\bar \nu_t^{\otimes N}\pf\,. \]
The first claim then follows from Theorem~\ref{thm:champmoyen}. By the same argument, denoting by $\mu_t^{1,N}$ the first $d$-dimensional marginal of $\bar \mu_0^{\otimes N} P_t^N$, we get
\begin{equation}\label{eq:propchaosW}
\mathcal W_2\po \nu_t^{1,N},\mu_t^{1,N}\pf \leqslant \frac1{\sqrt N} \mathcal W_2\po \bar \nu_0^{\otimes N} P_t^N,\bar \mu_0^{\otimes N} P_t^N\pf \leqslant   \frac1{\sqrt N}   M e^{-\lambda t}\mathcal W_2\po \bar \nu_0^{\otimes N} ,\bar \mu_0^{\otimes N}\pf\,,
\end{equation}
using Theorem~\ref{thm:particules}. Considering a coupling of $\bar \nu_0^{\otimes N}$ and $\bar \mu_0^{\otimes N}$ of the form $\pi_0^{\otimes N}$ where $\pi_0 \in \Pi(\bar \nu_0,\bar \mu_0)$ and then taking the infimum over $\pi_0$ yields
\[\frac{1}{\sqrt N}\mathcal W_2\po\bar  \nu_0^{\otimes N} ,\bar \mu_0^{\otimes N}\pf \leqslant \mathcal W_2\po \bar \nu_0 ,\bar \mu_0\pf\,,\]
and the second claim is thus obtained by letting $N$ go to infinity in \eqref{eq:propchaosW}.

As a consequence, for $t$ large enough, the function $\Phi_t:\bar \nu_0 \mapsto \bar \nu_t$, where $(\bar \nu_t)_{t\geqslant 0}$ is the solution of \eqref{eq:McKeanVlasov} with initial condition $\bar\nu_0$, is a contraction of $\mathcal P_2(\R^d)$ endowed with the $\mathcal W_2$ distance, which is complete. Hence, $\Phi_t$ admits a unique fixed point for $t$ large enough, and using that $\Phi_t \Phi_s= \Phi_s \Phi_t$ for all $s\geqslant 0$ and the uniqueness of the fixed point we get that the fixed point of $\Phi_t$ is in fact a fixed point of $\Phi_s$ for all $s\geqslant 0$, i.e. is a stationary solution of \eqref{eq:McKeanVlasov}.

For the last claim, let $(\bX_t,\bY_t)\sim \pi_t\in \Pi\po\bar\nu_0^{\otimes N} P_t,\bar \nu_t^{\otimes N}\pf$. We bound
\begin{eqnarray*}
\lefteqn{\mathbb E \po \mathcal W_2^2 \po \frac1N  \sum_{i=1}^N \delta_{X_{i,t}}, \bar\nu_t\pf \pf}\\
 & \leqslant & 2  \mathbb E \po \mathcal W_2^2 \po \frac1N  \sum_{i=1}^N \delta_{X_{i,t}}, \frac1N  \sum_{i=1}^N \delta_{Y_{i,t}} \pf\pf + 2 \mathbb E \po \mathcal W_2^2 \po \frac1N  \sum_{i=1}^N \delta_{Y_{i,t}}, \bar\nu_t\pf \pf\\
 & \leqslant & \frac{2}N \mathbb E\po |\bX_t-\bY_t|^2\pf + c_d \alpha(N)\po\int_{\R^d} |y|^5 \bar \nu_t(\dd y)\pf^{2/5}
\end{eqnarray*}
with $c_d$ a constant that depends only on $d$, where we used the coupling $(X_{J,t},Y_{J,t})$ with $J$ a random variable uniformly distributed over $\cco 1,N\ccf$ independent from $(\bX_t,\bY_t)$ to bound the first term, and \cite[Theorem 1]{FournierGuillin} for the second one. Then, reasoning exactly as in the proof of Theorem~\ref{thm:champmoyen}, we get that 
\[\int_{\R^d} |y|^5 \bar \nu_t(\dd y) \leqslant e^{-(c-a)t}\int_{\R^d} |y|^5 \bar \nu_0(\dd y) + Q'\]
for some $Q'>0$ which depend on the parameters of Assumption~\ref{Assu1}. Taking the infimum over $\pi_t\in \Pi\po\bar\nu_0^{\otimes N} P_t,\bar \nu_t^{\otimes N}\pf$ we end up with
\[
\mathbb E \po \mathcal W_2^2 \po \frac1N  \sum_{i=1}^N \delta_{X_{i,t}}, \bar\nu_t\pf \pf
  \leqslant  \frac{2}N \mathcal W_2^2\po \bar\nu_0^{\otimes N} P_t,\bar \nu_t^{\otimes N}\pf   + c_d \alpha(N)\po  e^{-(c-a)t}\int_{\R^d} |y|^5 \bar \nu_0(\dd y) + Q'\pf^{2/5}\,,
\]
and Theorem~\ref{thm:champmoyen} concludes the proof.
\end{proof}

\section{Discussion}\label{sec:discussion} \label{Sec:reflexionProof}

\subsubsection*{On the two proofs}

First, let us notice that the main ingredient of the two proofs of Theorem~\ref{Thm:main} is the very simple construction of a  weighted distance. Indeed, the weighted gradient of the second proof is
\[a(x) |\na f(x)|^2 = \lim_{r\rightarrow 0}\sup_{|y-x|\leqslant r} \frac{|f(x)-f(y)|^2}{|x-y|^2 \po a^{-1}(x)+a^{-1}(y)\pf/2 } := \lim_{r\rightarrow 0}\sup_{|y-x|\leqslant r} \frac{|f(x)-f(y)|^2}{r^2(x,y) }\,,\]
in other words $\sqrt{a}\na$  is the gradient associated to the weighted distance $r$.   Weighted distances  are a very standard tool, in particular for the study of the long-time convergence of Markov processes. However, it seems to us that the main originality of our work is that the role of the weight is exactly the converse of the usual one. Indeed, under Assumption~\ref{Assu1}, typically (see e.g. \cite{HairerMattingly2008,HMWasserstein,EberleGuillinZimmer,MonmarcheElementary,EberleMajka}), one considers \nv{costs} of the form $\rho(x,y) = \mathrm{d}(x,y)(1+V(x)+V(y))$ where $\mathrm{d}(x,y)=\1_{x\neq y}$ or $\mathrm{d}(x,y)=f(|x-y|)$ for some $f$  is the initial distance we are interested in and $V$ is a Lyapunov function (say, $V(x) = |x|^2$), which satisfies $LV\leqslant - cV$ outside a compact ball, thanks to the deterministic drift part $b\cdot \na$. This Lyapunov condition is then combined with some local information (local Poincaré inequality, local Doeblin condition, local coupling condition\dots) which is available on compact sets. In other words, the weight is used to obtain a decay outside a given compact set. On the contrary, in our case, we  take a weight of the form $V(x) = C-|x|^2$ in a compact ball, so that $LV\leqslant -cV$ in this ball thanks to the dissipative part $T\Delta$.

One of the interest of the first proof is that it gives more information than simply a contraction of the Wasserstein distance at time $t>0$: it shows that 
\[\forall t\geqslant 0\,,\qquad \mathbb E \po |X_t-Y_t|^\alpha\pf  \leqslant M_\alpha^\alpha e^{-\alpha \lambda t} \mathbb E \po |X_0-Y_0|^\alpha\pf \]
 where $(X_t,Y_t)_{t\geqslant 0}$ is the synchronous coupling of two diffusions. In particular, this is a Markovian coupling (i.e. $(X,Y)$ is a Markov process), realized with a single process for all times.
 
 On the other hand, one of the interest of the second proof is that it can be adapted to deal with quantities integrated with respect to $\mu$, which could be interesting in the perspective of proving a Poincaré inequality for non-explicit invariant measure of non-reversible diffusion processes without any restriction on $T$ but with the assumption that $\mu$ satisfies local Poincaré inequalities (which straightforwardly follow from elliptic lower and upper bounds on the transition density). Indeed, once integrated with respect to $\mu$, the computations of Section~\ref{Sec:preuveAlternative} reads
 \[\partial_t \mu\po a |\na P_t|^2\pf \leqslant -2\lambda \mu\po a |\na P_t|^2\pf\]
 and thus
 \[\mu\po a |\na P_t|^2\pf \leqslant \|a\|_\infty \|a^{-1}\|_\infty e^{-2\lambda t} \mu|\na f|^2\,,\]
 which doesn't give a $\mathcal W_2$-contraction but is sufficient to get a Poincaré inequality (see Section~\ref{sec:preuveL2}). An argument somehow in this spirit is given in \cite{BaudoinGordinaHerzog}, although in a completely different framework, i.e. with a non-elliptic hypoelliptic diffusion, a singular drift and weighted Poincaré inequalities. On the other hand, although the process is non-reversible in this case, its invariant measure is known, and thus the proof relies on the knowledge of $L^*$ the adjoint of $L$ in $L^2(\mu)$, which is unavailable if $\mu$ is unknown. Besides, in the elliptic case, if $L^*$ is known, it should be possible to adapt the arguments of \cite{CattiauxGuillin,BakryCattiauxGuillin}  to get a Poincaré inequality by considering a Lyapunov function  with respect to $L^*$ rather than $L$, as in \cite{BaudoinGordinaHerzog}.

 \subsubsection*{Flat torus}

As emphasized in the previous paragraph,  the proof relies on a synchronous coupling. Now, consider the very simple case of the Brownian motion on the torus $\T = \R/\Z$, i.e. $L=\Delta$. Considering a synchronous coupling of two such processes leads to $|X_t-Y_t|=|X_0-Y_0|$ (where we write $|x-y|$ the distance on $\T$). Hence, any $\rho:\T\times\T\rightarrow \R_+$ such that $c|x-y|^\alpha \leqslant \rho(x,y)\leqslant C|x-y|^\alpha$ for some $c,C,\alpha>0$ necessarily satisfies
\[\rho(X_t,Y_t) \geqslant \frac{c}{C}\rho(X_0,Y_0)\,.\]
  Hence, the first proof of Theorem~\ref{Thm:main} cannot apply in this case.
  
  Notice that, for compact manifolds, Poincaré inequalities can be obtained by lower and upper bounds on the transition density and then perturbation of the Lebesgue measure. See also  \cite{Wang2,Wang3} in the reversible case. Besides, notice that a weighted distance is used in \cite{Wang3}, but with the motivation of handling boundaries.

\subsubsection*{Exponential tail}

Consider on $\R$ the drift $b(x) = - U'(x)$ with $U(x)=\sqrt{1+x^2}$. It is well-known that $\mu \propto \exp(-U)$ satisfies a Poincaré inequality. However, using that $\|U'\|_\infty = 1$, and using that $\partial_t \mathbb E (X_t) = \mathbb E(b(X_t))$, we get that
\[|\mathbb E(X_t) - \mathbb E(X_0)| \leqslant t\,.\]
This clearly forbids a Wasserstein contraction \eqref{eq:contractW}, for any $\alpha\geqslant 1$. Indeed,
\[\mathcal W_\alpha(\delta_x P_t,\delta_y P_t) \geqslant |\mathbb E_x(X_t) - \mathbb E_y(X_t)| \geqslant |x-y| - 2t = \mathcal W_\alpha(\delta_x,\delta_y)-2t\,,\]
and thus
\[\sup_{x,y\in\R^d,x\neq y} \frac{\mathcal W_\alpha(\delta_x P_t,\delta_y P_t) }{\mathcal W_\alpha(\delta_x,\delta_y)} \geqslant 1\]
for all $t\geqslant 0$.

\subsubsection*{Wasserstein contraction versus Wasserstein convergence}

In the case where $\mu\propto \exp(-U)$ and $U$ is convex at infinity, $\mu$ is known to satisfy a so-called log-Sobolev inequality, see \cite{BakryGentilLedoux}, which is stronger than the Poincaré inequality and implies (combining the exponential decay of the entropy, the $\mathcal T_2$ Talagrand inequality implied by the log-Sobolev one and the $\mathcal W_2$/entropy regularization of \cite{RocknerWang}, see e.g. the proof of \cite[Theorem 2]{MonmarcheGuillinVFP}) that there exist $C,\lambda>0$ such that for all $t\geqslant0$ and \nv{any} probability law $\nu$ on $\R^d$,
\[\mathcal W_2(\nu P_t,\mu) \leqslant C e^{-\lambda t} \mathcal W_2(\nu,\mu)\,.\]
However this convergence toward\nv{s} equilibrium in $\mathcal W_2$ is weaker than the contraction \eqref{eq:contractW} for $\alpha=2$. Besides, we are interested in cases where the Poincaré inequality does not follow from standard arguments, and thus neither does the log-Sobolev inequality. 

Besides, in \cite[Corollary 1.3]{Wang3}, the log-Sobolev inequality is proven from the exponential decay of weighted gradients along the semi-group, but in contrast to the present work it concerns reversible processes, more precisely the perturbation argument used at the end of \cite[Corollary 1.3]{Wang3} requires an explicit invariant measure. In order to use a perturbation argument when the invariant measure of the semi-group $P_t$ is unknown, we can still say that the measure $\phi\mu$, where $\phi$ is bounded above and below by positive constants, is invariant by the semi-group $P_t^\phi  f= \phi^{-1} P_t(\phi f)$, but it is unclear whether this could be used to adapt the proof of \cite{Wang3} to non-reversible cases with unknown $\mu$.

\subsubsection*{Non-reversible sampling}

In the context of Markov Chain Monte Carlo algorithms, a question is the following: given a known target probability measure $\mu\propto \exp(-U)$, one would like to find, among all the drifts $b$ such that $\mu$ is invariant for $L= b\cdot \na + \Delta$, the one for which convergence toward equilibrium is the fastest (here for simplicity we fix the diffusion matrix to be the identity), see e.g. \cite{Lelievre2012,MonmarcheGuillinOU,HHS1,HHS2}. If the convergence toward equilibrium is quantified in term\nv{s} of the $L^2(\mu)$-norm, then
\[\sup_{f\in L^2(\mu)} \frac{\|P_t f - \mu f\|_{L^2(\mu)}}{\|\nv{ f} - \mu f\|_{L^2(\mu)}} \leqslant e^{-t/C_P(\mu)}\,,\]
see e.g. Theorem~\ref{Thm:main2}, and moreover this is an equality in the reversible case. As a consequence, adding a non-reversible part to the generator $-\na U \cdot\na + \Delta$ can only improve the $L^2$ convergence rate. For instance, \nv{as already mentioned in Section~\ref{sec1},}  in the Gaussian case where $U(x) = x\cdot A x$ for some definite positive symmetric matrix $A$, $C_P(\mu)$ is the minimum of the eigenvalues of $A$ while the optimal rate obtained with non-reversible elliptic diffusions is the mean of the eigenvalues of $A$, see \cite{Lelievre2012}.

Alternatively, the efficiency of the process can be measured in term\nv{s} of $\mathcal W_2$-contraction (and this leads to the same conclusion for Gaussian processes).  Let $\mathcal B$ be the set of drifts $b$ which are $K$-Lipschitz and such that $\mu$ is invariant for $b\cdot \na +\Delta$ (in practice, the Lipschitz constant impacts the stability of the numerical schemes used to discretize \eqref{eq:EDS}, and thus the non-reversible part should not be too large).  Then, instead of seeing Theorem~\ref{Thm:main2} as a way to obtain a Poincaré inequality from a Wasserstein contraction, here we can use this result to obtain a constraint on $M$ and $\lambda$ such that \eqref{eq:contractW} holds in term\nv{s} of $\mu$, uniformly over $\mathcal B$. Another way to see this is the following: for a fixed $t>0$ (corresponding to a fixed computational budget), what is the smallest $\gamma(t)$ one can obtain such that there exist a drift $b\in\mathcal B$ such that
\begin{equation}\label{eq:gama}
\forall \nu,\nu'\in\R^d\,,\qquad \mathcal W_2(\nu P_t,\nu' P_t) \leqslant \gamma(t) \mathcal W_2(\nu,\nu')\,,
\end{equation}
where $P_t$ is the semi-group associated to $b$? More generally, for a given $b$ and for $s\geqslant 0$, let
\[\gamma(s) = \sup_{\nu\neq \nu'} \frac{\mathcal W_2(\nu P_s,\nu' P_s)}{\mathcal W_2(\nu,\nu')}\,.\]
 Since $b$ is $K$-Lipschitz, we immediatly obtain from a synchronous coupling that $\gamma(s)\leqslant e^{Ks}$ and, following the proof of Theorem~\ref{Thm:main2},  \eqref{eq:gama} implies that
 \begin{equation}\label{eq:contrainteCp}
C_P(\mu) \leqslant \int_0^\infty \gamma(s)\dd s \leqslant \frac{e^{Kt}-1}{K}\sum_{k=0}^\infty \gamma(t)^k = \frac{e^{Kt}-1}{K(1-\gamma(t))} \,. 
 \end{equation}
Hence the lower bound on the contraction rate
\[ \gamma(t) \geqslant  1 - \frac{e^{Kt}-1}{C_P(\mu)K}\]
uniformly over $\mathcal B$. Of course this is not very informative for large $t$.

In fact, for sampling algorithm, one is more interested in ergodic averages rather than marginal laws at  given times. The bias of an MCMC estimator for a Lipschitz test function is typically bounded as
\[\left|\mathbb E_\nu\po \frac{1}{t}\int_0^t f(X_s) \dd s\pf - \mu f \right| \leqslant \|\na f\|_\infty \frac1t\int_0^t \mathcal W_2(\nu P_s ,\mu) \dd s  \leqslant \|\na f\|_\infty  \mathcal W_2(\nu,\mu) \frac1t \int_0^t \gamma(s)\,.\]
Trying to minimize the right hand side by a suitable choice of drift in $\mathcal B$, in any cases it is not possible to get better than $C_P(\mu)/t + o(1/t)$  due to the constraint \eqref{eq:contrainteCp}. Having a better contraction rate in large times is thus only useful if the estimator is $1/(t-t_0)\int_{t_0}^t f(X_s)\dd s$ for a suitable warm-up time $t_0$.

Finally,  notice that, in Theorem~\ref{Thm:main}, $M\leqslant \sqrt{2}$, and thus $\lambda\leqslant 2 T/C_P(\mu)$, which means that, for instance, the improvement of the $L^2$ decay rate from $T/C_P(\mu)$ to $\lambda$ obtained by \nv{combining} Theorems~\ref{Thm:main} and \ref{Thm:main2} is small.

\nv{
\subsubsection*{Link with a Feynman-Kac eigenvalue problem}
This section has to be credited to the anonymous referee who made the following remark: in Section~\ref{sec:demo-couplage}, writing $u(x) = T+\alpha \omega(x)$,  the proof works as soon as we find $\lambda>0$ and a positive function $u$ such that
\begin{equation}\label{eq:KreinRutman}
Lu(x) - \alpha k(x) u(x)  \leqslant - \lambda u(x)\qquad \forall x\in\R^d\,,
\end{equation}
which is \eqref{eq:conditionPsi}, but in fact looking for $u,\lambda$ such that equality holds in \eqref{eq:KreinRutman} is an eigenvalue problem for the operator $Lu-\alpha ku$, and the Krein-Rutman theorem (which holds under Assumption~\ref{Assu1}, see e.g. \cite[Corollary 4.2]{champagnat2023general}) states that in fact such an eigenpair $u,\lambda$ always exist, with $u>0$.

 The remaining question is whether $\lambda >0$. Since $f_t(x) = e^{-\lambda t} u(x)$ solves
\[\partial_t f_t = Lf_t - \alpha k u\,,\]
we have for $u$ a Feynman-Kac representation
\[u(x) = e^{\lambda t} \mathbb E_x \po e^{-\alpha \int_0^t k(X_s)\dd s} u(X_t) \pf\]
for any $t\geqslant 0$. Normalizing $u$ to have $\mu(u) = 1$, using that $u$ grows at most polynomially (again thanks to \cite[Corollary 4.2]{champagnat2023general} since under Assumption~\ref{Assu1}, $x\mapsto |x|^2$ is a Lyapunov function for $L$) and that $\mu$ has all its polynomial moment finite, we get by integrating the previous inequality with respect to $\mu$ and using the Hölder inequality that
\[1 \leqslant e^{-\lambda t} \po \mathbb E_{\mu} \po e^{- a \alpha \int_0^t k(X_s)\dd s} \pf\pf^{1/a} \po \mu(u^{b})\pf^{1/b} \]
with $1/b+1/a=1$ for all $a>1$, $t\geqslant 0$. This implies that
\[\lambda \geqslant  - \limsup_{t\geqslant 0} \frac{1}{t}\ln \po \mathbb E_{\mu} \po e^{- a\alpha \int_0^t k(X_s)\dd s}  \pf\pf^{1/a}\,, \]
for all $a>1$, and thus for $a=1$, namely
\[\lambda \geqslant \bar\lambda := - \limsup_{t\geqslant 0} \frac{1}{t}\ln   \mathbb E_{\mu} \po e^{- \alpha \int_0^t k(X_s)\dd s}  \pf\,. \]
 This quantity naturally appears when intertwining the semi-group $P_t$ with the gradient, as in \cite{10.3150/12-BEJ433,CattiauxGuillinFathi,arnaudon2018intertwinings} (however, notice that in our case, we are just interested in the asymptotic exponential rate, and the expectation is with respect to the invariant measure $\mu$ instead of taking the supremum over all initial conditions $x\in\R^d$).

It remains to see  whether $\bar \lambda>0$. Notice that, by Jensen's inequality,
\[\bar \lambda \leqslant - \limsup_{t\geqslant 0} \frac{1}{t}    \mathbb E_{\mu} \po  - \alpha \int_0^t k(X_s)\dd s  \pf  =  a \mu(k)\,, \]
which means that having a positive mean curvature $\mu(k)$ is necessary to proceed with our proof based on the synchronous coupling.
 
Under Assumption~\ref{Assu1}, it is classically seen that $V(x) = e^{\delta |x|^2}$ is a Lyapunov function for $L$ (in the sense that $LV \leqslant - r V$ outside some compact for some $r>0$) as soon as $\delta <c/(2T)$.  According to \cite[Theorem 2.3.]{Djellout}, $\mu$ thus satisfies a $T_1$ Talagrand transport inequality (for the Euclidean distance on $\R^d$) with constant $\theta T$ for some $\theta>0$ independent from $T$ (we refer to \cite{Djellout} for definitions and details). Then, we can use   \cite[Corollary 2.4]{guillin2009transportation}  (although it is written for reversible processes, this assumption is only used in its second part; the first part is a direct corollary of \cite[Theorem 2.2]{guillin2009transportation}  where reversibility is not assumed) to bound, for any $v > 0$,
\[\mathbb P_\mu \po - \frac\alpha{t} \int_0^t k(X_{s/T})\dd s \geqslant -\alpha \mu(k) +  v  \pf \leqslant \exp\po- \frac{v^2 t }{4\theta T \alpha^2 \|k\|_{lip}^2}\pf  \]
(where, for consistency with \cite{guillin2009transportation}, we rescaled the process in time so that the corresponding carré du champ is $\Gamma(f)=|\na f|^2$, corresponding to the standard distance, instead of $T|\na f|^2$). Here we have to assume that $k$ is a Lipschitz function, which is not a problem since our proof in Section~\ref{sec:demo-couplage} works if $k$ is replaced by a lower bound of $k$.

Then, we follow the proof of \cite[Corollary 4.4]{CattiauxGuillinFathi}. Assuming that $\mu(k)>0$, we introduce the event
\[\mathcal A = \left\{ - \frac\alpha{t} \int_0^t k(X_{s})\dd s \geqslant  - \alpha \mu(k) + v   \right\}  = \left\{ - \frac\alpha{Tt} \int_0^{Tt} k(X_{s/T})\dd s \geqslant  - \alpha \mu(k) + v   \right\} \] 
for some $v>0$ to be chosen later on. Since $k$ is lower bounded by $-K$,
\begin{eqnarray*}
\mathbb E_{\mu} \po e^{-   \alpha \int_0^t k(X_{s})\dd s}  \pf  & \leqslant & e^{\alpha Kt} \mathbb P(\mathcal A) + e^{ - \alpha \mu(k) t + v t} \\
& \leqslant &  \exp\po \alpha K t - \frac{v^2  t}{4\theta  \alpha^2 \|k\|_{lip}^2}\pf   + e^{ - \alpha \mu(k) t + v t}\,.
\end{eqnarray*}
Taking e.g. $v = \alpha \mu(k)/2$, we end up with
\[\lambda \geqslant \bar \lambda  >0 \]
provided
\[\alpha K < \frac{ \po \mu(k)\pf^2  }{16\theta    \|k\|_{lip}^2}\,. \]
Finally, as already mentioned, since we only need $k$ to be a lower bound of the curvature, we can take it with a Lipschitz constant arbitrary small. However, this reduces $\mu(k)$, which can become negative. But then by assuming $T$ is large enough we can make $\mu(k)$ positive again. More precisely: first, fix a lower bound $\tilde k$ of the real curvature $k$ defined by \eqref{eq:defk} (with $\inf \tilde k = -K$ and $\tilde k(x) = c$ for $x$ large enough) with a Lipschitz constant $\|\tilde k\|_{lip}$ sufficiently small so that 
\[\alpha K < \frac{ c^2  }{64\theta    \|\tilde k\|_{lip}^2}\,. \]
($\theta$ being related to $\mu$, it is not affected by choice of $\tilde k$). Then, since $\mu(\{x\in \R^d, |x| \geqslant R\}) \rightarrow 1$ for all $R>0$ as $T\rightarrow \infty$, there exists $T_0>0$  such that for $T\geqslant T_0$, $\mu(\tilde k) \geqslant c/2$, and thus $\lambda>0$.

At the conclusion of this sketch of proof, we have thus obtained the  following: under Assumption~\ref{Assu1}, for any $\alpha>0$, there exists $T_0>0$ such that the operator $L - \alpha \tilde k$ has an eigenpair $(u,\lambda)$ with $u>0$ and $\lambda>0$ if $T\geqslant T_0$, where $\tilde k$ is some lower bound of the curvature (which implies in particular that $Lu - \alpha k u \leqslant -\lambda u$). 

Now, in order to conclude with a result similar to Theorem~\ref{Thm:main}, it remains to do a bit of work on $u$, which we will not discuss here. Notice that, with an argument which starts with a non-explicit existence of the positive eigenfunction $u$, it is not necessarily easy to end up with  explicit estimates as in Theorem~\ref{Thm:main} (in particular for the constant $M$). However this approach can be applied in a much more general framework, for instance for non-elliptic hypoelliptic diffusion processes. This question will be the topic of a future work. Moreover, this discussion suggests that the high-temperature regime is in fact a necessary condition for the synchronous coupling to contract distances for sufficiently large times under Assumption~\ref{Assu1} (which is consistent with the remarks after Theorem~\ref{Thm:main2}).
}

\bibliographystyle{plain}
\bibliography{biblio}
\end{document}